\numberwithin{equation}{subsection}
\newcommand{\sqsp}{\renewcommand{\baselinestretch}{1.15}\tiny\normalsize}
\newtheorem{theorem}[subsection]{Theorem}
\newtheorem{lemma}[subsection]{Lemma}
\newtheorem{proposition}[subsection]{Proposition}
\newtheorem{corollary}[subsection]{Corollary}
\theoremstyle{definition}
\newtheorem{definition}[subsection]{Definition}
\newtheorem{example}[subsection]{Example}
\newcommand{\bk}{\mathbf{k}}
\newcommand{\bS}{\mathbf{S}}
\newcommand{\xbar}{\overline{x}}
\newcommand{\ybar}{\overline{y}}
\newcommand{\zbar}{\overline{z}}
\newcommand{\muop}{\mu^{op}}
\newcommand{\mualpha}{\mu_\alpha}
\newcommand{\mubeta}{\mu_\beta}
\newcommand{\mulambda}{\mu_\lambda}
\newcommand{\mun}{\mu^{(n)}}
\newcommand{\cyclic}{(Id + \pi + \pi^2)}
\newcommand{\sign}{(-1)^{\varepsilon(\sigma)}}
\newcommand{\oct}{\mathbf{O}}
\newcommand{\octalpha}{\mathbf{O}_\alpha}
\newcommand{\sedenion}{{
\begin{tiny}
\begin{center}
\begin{tabular}{|c|c|c|c|c|c|c|c|c|c|c|c|c|c|c|c|c|}\hline
 & $1$ & $e_1$ & $e_2$ & $e_3$ & $e_4$ & $e_5$ & $e_6$ & $e_7$ & $e_8$ & $e_9$ & $e_{10}$ & $e_{11}$ & $e_{12}$ & $e_{13}$ & $e_{14}$ & $e_{15}$ \\ \hline
$1$ & $1$ & $e_1$ & $e_2$ & $e_3$ & $e_4$ & $e_5$ & $e_6$ & $e_7$ & $e_8$ & $e_9$ & $e_{10}$ & $e_{11}$ & $e_{12}$ & $e_{13}$ & $e_{14}$ & $e_{15}$ \\ \hline
$e_1$ & $e_1$ & $-1$ & $e_3$ & $-e_2$ & $e_5$ & $-e_4$ & $-e_7$ & $e_6$ & $e_9$ & $-e_8$ & $-e_{11}$ & $e_{10}$ & $-e_{13}$ & $e_{12}$ & $e_{15}$ & $-e_{14}$ \\ \hline
$e_2$ & $e_2$ & $-e_3$ & $-1$ & $e_1$ & $e_6$ & $e_7$ & $-e_4$ & $-e_5$ & $e_{10}$ & $e_{11}$ & $-e_8$ & $-e_9$ & $-e_{14}$ & $-e_{15}$ & $e_{12}$ & $e_{13}$ \\ \hline
$e_3$ & $e_3$ & $e_2$ & $-e_1$ & $-1$ & $e_7$ & $-e_6$ & $e_5$ & $-e_4$ & $e_{11}$ & $-e_{10}$ & $e_9$ & $-e_8$ & $-e_{15}$ & $e_{14}$ & $-e_{13}$ & $e_{12}$ \\ \hline
$e_4$ & $e_4$ & $-e_5$ & $-e_6$ & $-e_7$ & $-1$ & $e_1$ & $e_2$ & $e_3$ & $e_{12}$ & $e_{13}$ & $e_{14}$ & $e_{15}$ & $-e_8$ & $-e_9$ & $-e_{10}$ & $-e_{11}$ \\ \hline
$e_5$ & $e_5$ & $e_4$ & $-e_7$ & $e_6$ & $-e_1$ & $-1$ & $-e_3$ & $e_2$ & $e_{13}$ & $-e_{12}$ & $e_{15}$ & $-e_{14}$ & $e_9$ & $-e_8$ & $e_{11}$ & $-e_{10}$ \\ \hline
$e_6$ & $e_6$ & $e_7$ & $e_4$ & $-e_5$ & $-e_2$ & $e_3$ & $-1$ & $-e_1$ & $e_{14}$ & $-e_{15}$ & $-e_{12}$ & $e_{13}$ & $e_{10}$ & $-e_{11}$ & $-e_8$ & $e_9$ \\ \hline
$e_7$ & $e_7$ & $-e_6$ & $e_5$ & $e_4$ & $-e_3$ & $-e_2$ & $e_1$ & $-1$ & $e_{15}$ & $e_{14}$ & $-e_{13}$ & $-e_{12}$ & $e_{11}$ & $e_{10}$ & $-e_9$ & $-e_8$ \\ \hline
$e_8$ & $e_8$ & $-e_9$ & $-e_{10}$ & $-e_{11}$ & $-e_{12}$ & $-e_{13}$ & $-e_{14}$ & $-e_{15}$ & $-1$ & $e_1$ & $e_2$ & $e_3$ & $e_4$ & $e_5$ & $e_6$ & $e_7$ \\ \hline
$e_9$ & $e_9$ & $e_8$ & $-e_{11}$ & $e_{10}$ & $-e_{13}$ & $e_{12}$ & $e_{15}$ & $-e_{14}$ & $-e_1$ & $-1$ & $-e_3$ & $e_2$ & $-e_5$ & $e_4$ & $e_7$ & $-e_6$ \\ \hline
$e_{10}$ & $e_{10}$ & $e_{11}$ & $e_8$ & $-e_9$ & $-e_{14}$ & $-e_{15}$ & $e_{12}$ & $e_{13}$ & $-e_2$ & $e_3$ & $-1$ & $-e_1$ & $-e_6$ & $-e_7$ & $e_4$ & $e_5$ \\ \hline
$e_{11}$ & $e_{11}$ & $-e_{10}$ & $e_9$ & $e_8$ & $-e_{15}$ & $e_{14}$ & $-e_{13}$ & $e_{12}$ & $-e_3$ & $-e_2$ & $e_1$ & $-1$ & $-e_7$ & $e_6$ & $-e_5$ & $e_4$  \\ \hline
$e_{12}$ & $e_{12}$ & $e_{13}$ & $e_{14}$ & $e_{15}$ & $e_8$ & $-e_9$ & $-e_{10}$ & $-e_{11}$ & $-e_4$ & $e_5$ & $e_6$ & $e_7$ & $-1$ & $-e_1$ & $-e_2$ & $-e_3$  \\ \hline
$e_{13}$ & $e_{13}$ & $-e_{12}$ & $e_{15}$ & $-e_{14}$ & $e_9$ & $e_8$ & $e_{11}$ & $-e_{10}$ & $-e_5$ & $-e_4$ & $e_7$ & $-e_6$ & $e_1$ & $-1$ & $e_3$ & $-e_2$  \\ \hline
$e_{14}$ & $e_{14}$ & $-e_{15}$ & $-e_{12}$ & $e_{13}$ & $e_{10}$ & $-e_{11}$ & $e_8$ & $e_9$ & $-e_6$ & $-e_7$ & $-e_4$ & $e_5$ & $e_2$ & $-e_3$ & $-1$ & $e_1$ \\ \hline
$e_{15}$ & $e_{15}$ & $e_{14}$ & $-e_{13}$ & $-e_{12}$ & $e_{11}$ & $e_{10}$ & $-e_9$ & $e_8$ & $-e_7$ & $e_6$ & $-e_5$ & $-e_4$ & $e_3$ & $e_2$ & $-e_1$ & $-1$  \\ \hline
\end{tabular}
\end{center}
\end{tiny}
}}
\newcommand{\matrixy}{{\begin{pmatrix}
1 & e_2 & e_3\\
-e_2 & 0 & 0\\
-e_3 & 0 & 0\end{pmatrix}}}
\begin{document}

\title{Hom-power associative algebras}
\author{Donald Yau}

\begin{abstract}
A generalization of power associative algebra, called Hom-power associative algebra, is studied.  The main result says that a multiplicative Hom-algebra is Hom-power associative if and only if it satisfies two identities of degrees three and four.  It generalizes Albert's result that power associativity is equivalent to third and fourth power associativity.  In particular, multiplicative right Hom-alternative algebras and non-commutative Hom-Jordan algebras are Hom-power associative.
\end{abstract}

\keywords{Hom-power associative algebra, right Hom-alternative algebra, noncommutative Hom-Jordan algebra, sedenions.}

\subjclass[2000]{17A05, 17A15, 17A20, 17D15}

\address{Department of Mathematics\\
    The Ohio State University at Newark\\
    1179 University Drive\\
    Newark, OH 43055, USA}
\email{dyau@math.ohio-state.edu}

\date{\today}
\maketitle

\sqsp

\section{Introduction}


Power associative algebras were first systematically studied by Albert \cite{albert1,albert2}.  These non-associative algebras play an important role in the studies of associative bilinear forms, loops, the Cayley-Dickson algebras, and Banach algebras, among many other subjects \cite{arenas,cawagas,velasco}.  A power associative algebra can be defined conceptually as an algebra in which every element generates an associative sub-algebra.  Equivalently, power associative algebras can be algebraically characterized as follows.  For an element $x$ in an algebra $A$, define the right powers
\begin{equation}
\label{rpower}
x^1 = x \quad\text{and}\quad x^{n+1} = x^nx
\end{equation}
for $n \geq 1$.  Then an algebra $A$ is power associative if and only if
\begin{equation}
\label{pass}
x^n = x^{n-i}x^i
\end{equation}
for all $x \in A$, $n \geq 2$, and $i \in \{1,\ldots,n-1\}$.  Therefore, one can also think of \eqref{pass} as the defining identities for power associative algebras.  If $A$ satisfies \eqref{pass} for a particular integer $n$, then $A$ is called $n$th power associative.  A theorem of Albert \cite{albert1} says that (in characteristic $0$, which is assumed throughout this paper) an algebra is power associative if and only if it is third and fourth power associative, which in turn is equivalent to
\begin{equation}
\label{34}
x^2x = xx^2 \quad\text{and}\quad x^4 = x^2x^2
\end{equation}
for all elements $x$.

Associative algebras are clearly power associative, but the class of power associative algebras includes some important classes of non-associative algebras.  For example, alternative algebras and Jordan algebras are all power associative because they satisfy \eqref{34}.  In fact, right-alternative and non-commutative Jordan algebras are power associative algebras (Corollary \ref{cor4:main}).  Therefore, power associative algebras are common generalizations of right-alternative algebras and non-commutative Jordan algebras.  Moreover, there exist power associative algebras that are neither alternative nor non-commutative Jordan.  One such power associative algebra is the sedenions, which is the sixteen-dimensional algebra constructed from the octonions via the Cayley-Dickson process with the parameter $-1$ (Example \ref{ex:sedenion}).


The purpose of this paper is to study a generalization of power associative algebras, called Hom-power associative algebras, in which the defining identities \eqref{pass} are relaxed by a certain linear self-map, called the twisting map.  Hom-type algebras appeared in \cite{hls} in the form of Hom-Lie algebras, in which the Jacobi identity is relaxed by a twisting map, to describe the structures on some deformations of the Witt and the Virasoro algebras.  Hom-Lie and Hom-associative algebras are further studied in \cite{ms,ms2,yau,yau2} and the references therein.  Hom-type generalizations of other algebraic structures, including quantum groups, the Yang-Baxter equations, Novikov algebras, and Lie and infinitesimal bialgebras, can be found in \cite{yau3} - \cite{yau15} and the references therein.

One reason for considering Hom-power associative algebras is that the class of Hom-power associative algebras has a certain closure property with respect to twisting by self-morphisms that the class of power associative algebras does not have.  Moreover, Hom-power associative algebras retain some important properties of power associative algebras.  For example, the main result of this paper says that  when the twisting map is multiplicative, Hom-power associativity, which involves infinitely many defining identities, is equivalent to the Hom-versions of the two identities \eqref{34}.  As a consequence of this result, multiplicative right Hom-alternative algebras and non-commutative Hom-Jordan algebras are Hom-power associative.


The rest of this paper is organized as follows.

In section \ref{sec:basic} we define Hom-type analogues of the right powers and power associativity, called Hom-powers and Hom-power associativity.  The closure property mentioned above is discussed in Theorem \ref{thm:twist}.  It says that a Hom-power associative algebra gives rise to another one when its structure maps (the multiplication and the twisting map) are both twisted by any self-morphism.  If this twisting construction is applied to a power associative algebra, regarded as a Hom-power associative algebra with identity twisting map, the result is a multiplicative Hom-power associative algebra (Corollary \ref{cor2:twist}) that is usually not power associative.  We illustrate this result in Example \ref{ex:sedenion} with the sedenions.  Given any Hom-algebra $A$, one can associate a family of Hom-algebras $A(\lambda)$ with the same underlying vector space, where $\lambda$ is any parameter in the ground field, in which the multiplication is an interpolation between those in $A$ and its opposite algebra.  We show that when $A$ is Hom-power associative, the Hom-powers in $A$ and $A(\lambda)$ coincide (Theorem \ref{thm:alambda}), and $A(\lambda)$ is also Hom-power associative (Corollary \ref{cor1:alambda}).

In preparation for the main result in section \ref{sec:maintheorem}, in sections \ref{sec:third} and \ref{sec:fourth} we study third and fourth Hom-power associativity, respectively.  In Theorem \ref{thm:third} several characterizations of third Hom-power associativity are given.  One consequence of this result is that up to $(n-1)$st Hom-power associativity implies certain commutative identities involving the Hom-powers $x^{n-i}$ and $x^i$ (Proposition \ref{prop:n-1}).  It is also observed that the intersection of the classes of third Hom-power associative algebras and of Hom-Lie admissible algebras is precisely the class of $A_3$-Hom-associative algebras (Theorem \ref{thm:a3}).  This last result is not used in later sections.

In section \ref{sec:fourth} it is observed that multiplicative right Hom-alternative and non-commutative Hom-Jordan algebras are third and fourth Hom-power associative (Proposition \ref{prop:homjordan}).  A conceptual difference between power associative algebras and Hom-power associative algebras is discussed (Examples \ref{ex:oct} and \ref{ex:jordan}).  Theorem \ref{thm:fourth} provides the linearized form of the identity $x^4 = \alpha(x^2)\alpha(x^2)$, which is part of fourth Hom-power associativity.  This result is crucial to the proof of the main result.

The main result of this paper (Corollary \ref{cor1:main}) is proved in section \ref{sec:maintheorem}.  It says that for a multiplicative Hom-algebra, Hom-power associativity is equivalent to up to fourth Hom-power associativity, which in turn is equivalent to two identities involving the first few Hom-powers.  In particular, for a multiplicative Hom-flexible algebra, Hom-power associativity is equivalent to one identity (Corollary \ref{cor1.5:main}).  Another consequence of the main result is that multiplicative right Hom-alternative and non-commutative Hom-Jordan algebras are Hom-power associative (Corollary \ref{cor3:main}).

\section{Basic properties of Hom-power associative algebras}
\label{sec:basic}

In this section we introduce Hom-power associative algebras and provide some basic construction results and examples.

\subsection{Notations}

Throughout the rest of this paper, we work over a fixed field $\bk$ of characteristic $0$.  If $V$ is a $\bk$-module and $\mu \colon V^{\otimes 2} \to V$ is a bilinear map, then $\muop \colon V^{\otimes 2} \to V$ denotes the opposite map, i.e., $\muop = \mu\tau$, where $\tau \colon V^{\otimes 2} \to V^{\otimes 2}$ interchanges the two variables.  For $x$ and $y$ in $V$, we sometimes write $\mu(x,y)$ as $xy$.  For a linear self-map $\alpha \colon V \to V$, denote by $\alpha^n$ the $n$-fold composition of $n$ copies of $\alpha$, with $\alpha^0 \equiv Id$.

The symmetric group on $n$ letters is denoted by $S_n$.  A multi-linear map $f \colon V^{\otimes n} \to V$ is said to be \textbf{totally anti-symmetric} if
\[
f = \sign f \circ \sigma
\]
for all $\sigma \in S_n$, where $\varepsilon(\sigma)$ is the signature of $\sigma$.

Let us begin with the basic definitions regarding Hom-algebras.

\begin{definition}
\label{def:homalg}
\begin{enumerate}
\item
By a \textbf{Hom-algebra} we mean a triple $(A,\mu,\alpha)$ in which $A$ is a $\bk$-module, $\mu \colon A^{\otimes 2} \to A$ is a bilinear map (the multiplication), and $\alpha \colon A \to A$ is a linear map (the twisting map).  Such a Hom-algebra is often abbreviated to $A$.
\item
A Hom-algebra $(A,\mu,\alpha)$ is \textbf{multiplicative} if $\alpha$ is multiplicative with respect to $\mu$, i.e., $\alpha\mu = \mu \alpha^{\otimes 2}$.
\item
A Hom-algebra $(A,\mu,\alpha)$ is \textbf{commutative} if its multiplication is commutative, i.e., $\mu = \muop$.
\item
Let $A$ and $B$ be Hom-algebras.  A \textbf{morphism} $f \colon A \to B$ of Hom-algebras is a linear map $f \colon A \to B$ such that $f\mu_A = \mu_B f^{\otimes 2}$ and $f\alpha_A = \alpha_B f$.
\end{enumerate}
\end{definition}

If $(A,\mu)$ is any algebra, then we also regard it as a Hom-algebra $(A,\mu,Id)$ with identity twisting map.

Next we define the Hom-versions of Albert's right powers and power associativity \cite{albert1,albert2}.

\begin{definition}
\label{def:hompower}
Let $(A,\mu,\alpha)$ be a Hom-algebra and $x \in A$.
\begin{enumerate}
\item
Define the \textbf{$n$th Hom-power} $x^n \in A$ inductively by
\begin{equation}
\label{hompower}
x^1 = x, \qquad
x^n = x^{n-1}\alpha^{n-2}(x)
\end{equation}
for $n \geq 2$.
\item
We say that $A$ is \textbf{$n$th Hom-power associative} if
\begin{equation}
\label{nhpa}
x^n = \alpha^{n-i-1}(x^i) \alpha^{i-1}(x^{n-i})
\end{equation}
for all $x \in A$ and $i \in \{1,\ldots, n-1\}$.
\item
We say that $A$ is \textbf{up to $n$th Hom-power associative} if $A$ is $k$th Hom-power associative for all $k \in \{2,\ldots,n\}$.
\item
We say that $A$ is \textbf{Hom-power associative} if $A$ is $n$th Hom-power associative for all $n \geq 2$.
\end{enumerate}
\end{definition}

Some remarks are in order.  First, if $\alpha = Id$ in Definition \ref{def:hompower}, then $x^n = x^{n-1}x$, which coincides with Albert's notion of \textbf{right power} \cite{albert1,albert2}.  Moreover, in this case, $n$th Hom-power associativity reduces to $n$th \textbf{power associativity}, i.e., $x^n = x^ix^{n-i}$ for $1\leq i \leq n-1$.  Therefore, a \textbf{power associative algebra} is equivalent to a Hom-power associative algebra whose twisting map is the identity map.

Second, the first few Hom-powers are
\[
\begin{split}
x^1 &= x,\quad x^2 = xx, \quad x^3 = x^2\alpha(x) = (xx)\alpha(x),\\
x^4 &= x^3\alpha^2(x) = (x^2\alpha(x))\alpha^2(x) = ((xx)\alpha(x))\alpha^2(x),\\
x^5 &= x^4\alpha^3(x) = \left((x^2\alpha(x))\alpha^2(x)\right)\alpha^3(x) = \left(((xx)\alpha(x))\alpha^2(x)\right)\alpha^3(x).
\end{split}
\]
Note that the $i=n-1$ case of $n$th Hom-power associativity \eqref{nhpa} is automatically true because it says $x^n = x^{n-1}\alpha^{n-2}(x)$, which holds by definition.  It is included in the definition for symmetry.  Moreover, every Hom-algebra is automatically second Hom-power associative, since with $n=2$ the condition \eqref{nhpa} says $x^2 = xx$, which holds by definition.  Therefore, the first nontrivial case is third Hom-power associativity, which will be studied in the next section.

Third, by definition $n$th Hom-power associativity \eqref{nhpa} requires $n-1$ identities.  Under certain commutative conditions, the number of identities that need to be satisfied for $n$th Hom-power associativity can be reduced by half.  In fact, the two factors $\alpha^{n-i-1}(x^i)$ and $\alpha^{i-1}(x^{n-i})$ on the right-hand side of \eqref{nhpa} are related by an interchange of $i$ and $n-i$.  Therefore, if these two elements commute for all $i \in \{1,\ldots,n-1\}$, then $n$th Hom-power associativity reduces to $n/2$ or $(n-1)/2$ identities, according to whether $n$ is even or odd.  These commutative conditions will appear in Proposition \ref{prop:n-1}, which will be used in the proof of Theorem \ref{mainthm}.

Our first result says that the category of ($n$th) Hom-power associative algebras is closed under twisting by self-morphisms.

\begin{theorem}
\label{thm:twist}
Let $(A,\mu,\alpha)$ be a Hom-algebra and $\beta \colon A \to A$ be a morphism.  Define the Hom-algebra
\begin{equation}
\label{abeta}
A_\beta = (A,\mubeta = \beta\mu,\beta\alpha).
\end{equation}
Then the following statements hold.
\begin{enumerate}
\item
If $A$ is multiplicative, then so is $A_\beta$.
\item
We have
\begin{equation}
\label{abetapower}
(x^n)' = \beta^{n-1}(x^n)
\end{equation}
for all $x \in A$ and $n \geq 1$, where $x^n$ and $(x^n)'$ are the $n$th Hom-powers in $A$ and $A_\beta$, respectively.
\item
If $A$ is $n$th Hom-power associative, then so is $A_\beta$.  Therefore, if $A$ is Hom-power associative, then so is $A_\beta$.
\end{enumerate}
\end{theorem}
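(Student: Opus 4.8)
The plan is to prove the three statements of Theorem~\ref{thm:twist} in order, since each builds on the previous one. For part (1), I would simply verify the multiplicativity identity $(\beta\alpha)\mubeta = \mubeta(\beta\alpha)^{\otimes 2}$ by direct expansion: writing $\mubeta = \beta\mu$ and using that $\beta$ is a morphism (so $\beta\mu = \mu\beta^{\otimes 2}$) together with the multiplicativity of $A$ (so $\alpha\mu = \mu\alpha^{\otimes 2}$), the two sides collapse to $\beta^2\mu\alpha^{\otimes 2}$ after commuting $\alpha$ and $\beta$ past one another. The key observation I would record first is that $\beta$ being a morphism forces $\beta\alpha = \alpha\beta$, which lets me freely reorder powers of $\alpha$ and $\beta$ throughout; this commutation is the engine behind the whole proof.

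For part (2), I would prove the formula $(x^n)' = \beta^{n-1}(x^n)$ by induction on $n$. The base case $n=1$ is immediate since $(x^1)' = x = \beta^0(x^1)$. For the inductive step, I would unwind the definition of the Hom-power in $A_\beta$, namely
\[
(x^n)' = \mubeta\bigl((x^{n-1})', (\beta\alpha)^{n-2}(x)\bigr) = \beta\,\mu\bigl(\beta^{n-2}(x^{n-1}),\,\beta^{n-2}\alpha^{n-2}(x)\bigr),
\]
using the inductive hypothesis and the commutation $\beta\alpha = \alpha\beta$ to rewrite $(\beta\alpha)^{n-2} = \beta^{n-2}\alpha^{n-2}$. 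Since $\beta$ is a morphism, I can pull the outer factors of $\beta$ out of the product, obtaining $\beta^{n-1}\mu\bigl(x^{n-1},\alpha^{n-2}(x)\bigr) = \beta^{n-1}(x^n)$, which is exactly the claim. This is the central computation of the theorem, and while it is routine, the place to be careful is tracking the exponents on $\beta$ and $\alpha$ so that the morphism property and multiplicativity are applied with matching numbers of factors.

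For part (3), I would use part (2) to transport the $n$th Hom-power associativity identity from $A$ to $A_\beta$. The twisting map of $A_\beta$ is $\beta\alpha$, so the identity \eqref{nhpa} for $A_\beta$ reads
\[
(x^n)' = (\beta\alpha)^{n-i-1}\bigl((x^i)'\bigr)\,(\beta\alpha)^{i-1}\bigl((x^{n-i})'\bigr),
\]
where the product is taken in $\mubeta$. I would substitute the formula from part (2) into each factor, apply $\beta\alpha = \alpha\beta$ to separate the $\beta$ and $\alpha$ powers, and then extract all factors of $\beta$: the two inner powers contribute $\beta^{n-i-1+i-1} = \beta^{n-2}$, the outer $\mubeta$ contributes one more $\beta$, and the two Hom-powers $(x^i)'$ and $(x^{n-i})'$ contribute $\beta^{i-1}$ and $\beta^{n-i-1}$ respectively, for a combined $\beta^{n-2}$ from inside. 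Collecting these and using that $\beta$ is a morphism to move all the $\beta$'s outside the product, the right-hand side becomes $\beta^{n-1}$ applied to $\alpha^{n-i-1}(x^i)\alpha^{i-1}(x^{n-i})$, which by $n$th Hom-power associativity of $A$ equals $\beta^{n-1}(x^n) = (x^n)'$. The final sentence of part (3), that Hom-power associativity of $A$ implies that of $A_\beta$, is then immediate since $n$th Hom-power associativity transfers for every $n \geq 2$.

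The main obstacle is purely bookkeeping: keeping the exponents on $\beta$ consistent when substituting the formula from part (2) into both sides of the associativity identity, and confirming that the accumulated powers of $\beta$ on the right-hand side total exactly $n-1$ so that they match the left-hand side. Once the commutation $\beta\alpha = \alpha\beta$ and the morphism property $\beta\mu = \mu\beta^{\otimes 2}$ are in hand, no deeper idea is required; the proof is a careful but mechanical tracking of these scalars and exponents.
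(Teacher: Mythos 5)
Your proposal is correct and follows essentially the same route as the paper: the identity $(x^n)' = \beta^{n-1}(x^n)$ is proved by the same induction, and part (3) is the same computation using the morphism property and $\beta\alpha = \alpha\beta$ to extract $\beta^{n-2}$ from each factor plus one $\beta$ from $\mubeta$, merely read in the opposite direction (you reduce the right-hand side of the $A_\beta$ identity to $\beta^{n-1}(x^n)$, while the paper expands $\beta^{n-1}(x^n)$ into that right-hand side). Your explicit verification of part (1), which the paper dismisses as obvious, is also correct.
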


\begin{proof}
The first assertion regarding multiplicativity is obvious.  The second assertion is proved by induction, with the $n=1$ case being trivially true.  For the induction step, suppose that \eqref{abetapower} is true.  Using the fact that $\beta$ is a morphism, we have
\[
\begin{split}
(x^{n+1})' &= \mubeta((x^n)', (\beta\alpha)^{n-1}(x))\\
&= \mubeta(\beta^{n-1}(x^n), \beta^{n-1}\alpha^{n-1}(x))\\
&= \beta^n\mu(x^n,\alpha^{n-1}(x))\\
&= \beta^n(x^{n+1}).
\end{split}
\]
This finishes the induction and proves the second assertion.

For the last assertion, assume that $A$ is $n$th Hom-power associative and $i \in \{1,\ldots,n-1\}$.  We must show \eqref{nhpa} for $A_\beta$.  Using \eqref{abetapower} and the fact that $\beta$ is a morphism, we have
\[
\begin{split}
(x^n)' &= \beta^{n-1}(x^n)\\
&= \beta^{n-1}\mu(\alpha^{n-i-1}(x^i),\alpha^{i-1}(x^{n-i}))\\
&= \mubeta(\beta^{n-2}\alpha^{n-i-1}(x^i),\beta^{n-2}\alpha^{i-1}(x^{n-i}))\\
&= \mubeta\left((\beta\alpha)^{n-i-1}(\beta^{i-1}(x^i)), (\beta\alpha)^{i-1}(\beta^{n-i-1}(x^{n-i}))\right)\\
&= \mubeta\left((\beta\alpha)^{n-i-1}(x^i)', (\beta\alpha)^{i-1}(x^{n-i})'\right).
\end{split}
\]
This shows that $A_\beta$ is $n$th Hom-power associative.
\end{proof}

Now we discuss some consequences of Theorem \ref{thm:twist}.  The result below says that every multiplicative Hom-power associative algebra induces a derived sequence of multiplicative Hom-power associative algebras with twisted structure maps.

\begin{corollary}
\label{cor1:twist}
Let $(A,\mu,\alpha)$ be a multiplicative Hom-power associative algebra.  Then
\[
A^n = (A,\mun = \alpha^{2^n-1}\mu,\alpha^{2^n})
\]
is a multiplicative Hom-power associative algebra for each $n \geq 1$.
\end{corollary}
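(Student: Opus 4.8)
The plan is to exhibit $A^n$ as a \emph{single} application of the twisting construction of Theorem \ref{thm:twist}, rather than iterating it. Concretely, I would take the self-map $\beta = \alpha^{2^n-1}$ and verify that $A^n$ is exactly the twisted Hom-algebra $A_\beta$ of \eqref{abeta}; the corollary then follows at once from parts (1) and (3) of that theorem.

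The only fact that needs checking is that $\beta = \alpha^{2^n-1}$ is a genuine morphism $A \to A$ in the sense of Definition \ref{def:homalg}(4). First I would observe that $\alpha$ itself is a self-morphism of $A$: the multiplicativity hypothesis $\alpha\mu = \mu\alpha^{\otimes 2}$ is precisely the condition $f\mu_A = \mu_A f^{\otimes 2}$ with $f = \alpha$, and the trivial commutation $\alpha\alpha = \alpha\alpha$ is the condition $f\alpha_A = \alpha_A f$. Since a composite of Hom-algebra morphisms is again a morphism, every power $\alpha^k$ with $k \geq 1$ is a self-morphism of $A$; in particular so is $\beta = \alpha^{2^n-1}$, because $2^n - 1 \geq 1$ for all $n \geq 1$.

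With $\beta$ in hand the identification is a direct substitution into \eqref{abeta}:
\[
A_\beta = (A, \beta\mu, \beta\alpha) = (A, \alpha^{2^n-1}\mu, \alpha^{2^n-1}\alpha) = (A, \alpha^{2^n-1}\mu, \alpha^{2^n}),
\]
which is exactly $A^n = (A, \mun, \alpha^{2^n})$. Since $A$ is assumed multiplicative and Hom-power associative, Theorem \ref{thm:twist}(1) yields that $A_\beta = A^n$ is multiplicative, and Theorem \ref{thm:twist}(3) yields that it is Hom-power associative.

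I do not expect a genuine obstacle here: the entire content is the bookkeeping observation that the exponent $2^n - 1$ is chosen so that twisting both the multiplication $\mu$ and the twisting map $\alpha$ by the same self-morphism $\alpha^{2^n-1}$ produces the pair $(\alpha^{2^n-1}\mu, \alpha^{2^n})$. If one preferred, the same result could be obtained by induction on $n$, twisting $A^n$ by its own twisting map $\alpha^{2^n}$ to pass from $A^n$ to $A^{n+1}$; but the single-step formulation above avoids re-verifying the morphism and multiplicativity conditions at each stage and is therefore the cleaner route.
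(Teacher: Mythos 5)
Your proof is correct, but it follows a genuinely different route from the paper's. The paper argues by induction on $n$: the base case $A^1$ is the $\beta = \alpha$ instance of Theorem \ref{thm:twist}, and the inductive step uses the identity $A^{n+1} = (A^n)^1$, so each derived algebra is obtained from the previous one by twisting it by its own twisting map $\alpha^{2^n}$ (which is automatically a morphism of $A^n$ because $A^n$ is multiplicative by the induction hypothesis). You instead realize $A^n$ in a single step as $A_\beta$ in \eqref{abeta} with $\beta = \alpha^{2^n-1}$, after checking that $\beta$ is a self-morphism of $(A,\mu,\alpha)$; this does hold exactly as you say, since iterating $\alpha\mu = \mu\alpha^{\otimes 2}$ gives $\alpha^k\mu = \mu(\alpha^k)^{\otimes 2}$ for all $k \geq 1$, and $\alpha^k\alpha = \alpha\alpha^k$ is trivial. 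The bookkeeping $\beta\alpha = \alpha^{2^n-1}\alpha = \alpha^{2^n}$ then identifies $A_\beta$ with $A^n$, and parts (1) and (3) of Theorem \ref{thm:twist} finish the argument. What each route buys: yours invokes Theorem \ref{thm:twist} only once and replaces the induction by the (equally short) observation that powers of $\alpha$ are morphisms, while the paper's iterated form makes visible the structural fact that $A^{n+1}$ is the first derived algebra of $A^n$, which is how the derived sequence is presented in the surrounding text. One small correction to your closing remark: the inductive route also does not require re-verifying the morphism or multiplicativity conditions at each stage, since the induction hypothesis that $A^n$ is multiplicative already supplies them.
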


\begin{proof}
That $A^1$ is a multiplicative Hom-power associative algebra follows from the $\beta = \alpha$ case of Theorem \ref{thm:twist}.  Then we use induction and the fact $A^{n+1} = (A^n)^1$ to finish the proof.
\end{proof}

The next result says that multiplicative Hom-power associative algebras can be constructed from power associative algebras and their self-morphisms.  A result of this form first appeared in \cite{yau2}.

\begin{corollary}
\label{cor2:twist}
Let $A$ be a power associative algebra and $\beta \colon A \to A$ be an algebra morphism.  Then
\[
A_\beta = (A,\mubeta = \beta\mu,\beta)
\]
is a multiplicative Hom-power associative algebra.
\end{corollary}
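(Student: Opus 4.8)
The plan is to derive Corollary \ref{cor2:twist} as a special case of Theorem \ref{thm:twist}. The key observation is that a power associative algebra $A$, by the remarks following Definition \ref{def:hompower}, is precisely a Hom-power associative algebra with identity twisting map, i.e. the Hom-algebra $(A,\mu,Id)$. So first I would recast the hypotheses in Hom-language: view $(A,\mu,Id)$ as a Hom-power associative algebra, and note that since $\beta$ is an algebra morphism of $(A,\mu)$, it is automatically a morphism of the Hom-algebra $(A,\mu,Id)$ (the condition $\beta\alpha = \alpha\beta$ reduces to the trivial $\beta\cdot Id = Id\cdot\beta$).

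Next I would apply Theorem \ref{thm:twist} with this $\beta$ to the Hom-algebra $(A,\mu,Id)$. The twisted Hom-algebra produced by \eqref{abeta} is $(A,\mubeta = \beta\mu,\beta\cdot Id) = (A,\beta\mu,\beta)$, which is exactly the $A_\beta$ in the statement. By part (3) of Theorem \ref{thm:twist}, since $(A,\mu,Id)$ is Hom-power associative, so is $A_\beta$. This gives the Hom-power associativity claim directly.

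It then remains to verify multiplicativity of $A_\beta$. Here I would invoke part (1) of Theorem \ref{thm:twist}, which says twisting preserves multiplicativity, but I must first check that the starting Hom-algebra $(A,\mu,Id)$ is multiplicative: its twisting map $Id$ satisfies $Id\circ\mu = \mu\circ Id^{\otimes 2}$ trivially, so $(A,\mu,Id)$ is multiplicative, and hence $A_\beta$ is multiplicative as well. (Alternatively one checks directly that $\beta\mubeta = \mubeta\beta^{\otimes 2}$ follows from $\beta$ being an algebra morphism, i.e. $\beta\mu = \mu\beta^{\otimes 2}$.)

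This argument is essentially a bookkeeping translation, so I do not anticipate a genuine obstacle; the only subtlety worth stating explicitly is the identification of a power associative algebra with the identity-twisted Hom-power associative algebra, and the observation that an algebra morphism $\beta$ satisfies the extra morphism condition $\beta\alpha_A = \alpha_B\beta$ for free when $\alpha = Id$. Once these two points are noted, the corollary falls out of Theorem \ref{thm:twist} with $\alpha = Id$.
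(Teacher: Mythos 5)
Your proposal is correct and is exactly the paper's argument: the paper proves Corollary \ref{cor2:twist} by declaring it ``the $\alpha = Id$ case of Theorem \ref{thm:twist},'' and your write-up simply spells out the bookkeeping (identifying $(A,\mu,Id)$ as a multiplicative Hom-power associative algebra and checking that an algebra morphism is a Hom-algebra morphism when $\alpha = Id$) that the paper leaves implicit.
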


\begin{proof}
This is the $\alpha = Id$ case of Theorem \ref{thm:twist}.
\end{proof}

\begin{example}[\textbf{Hom-Sedenions}]
\label{ex:sedenion}
In this example, let $\bk$ be the field of real numbers.  Let $\bS$ be the $16$-dimensional algebra constructed from $\bk$ (whose involution is taken to be the identity map) by applying the Cayley-Dickson process four times with the parameter $-1$ in each step  \cite{schafer}.  Elements in the non-commutative algebra $\bS$ are called sedenions.  Here we show how Corollary \ref{cor2:twist} can be applied to $\bS$ to create Hom-power associative algebras.

Let us first recall some basic properties of $\bS$ \cite{carmody,imaeda}.  Unlike the $8$-dimensional Cayley octonions, $\bS$ is not an alternative algebra, but it is power associative.  Let us provide proofs of these facts.  From the Cayley-Dickson process itself, the algebra $\bS$ has a standard basis $\{e_i \colon 0 \leq i \leq 15\}$, with $e_0 = 1$ a two-sided multiplicative identity.  Moreover, for $i \not= j$ in $\{1,\ldots,15\}$, we have
\[
e_i^2 = -1 \quad\text{and}\quad e_ie_j = -e_je_i.
\]
The multiplication table for $\bS$ using this basis is given below.

\sedenion

To see that $\bS$ is not an alternative algebra, recall that an algebra is said to be \emph{left alternative} if it satisfies
\[
(xx)y = x(xy)
\]
for all elements $x$ and $y$.  If $x = e_1 + e_8$ and $y = e_3 + e_9$ in $\bS$, then we have
\[
(xx)y = -2(e_3 + e_9), \quad x(xy) = - e_3 + e_5 -2e_9 - e_{10} - e_{12}.
\]
Therefore, $\bS$ is not left alternative.

To see that $\bS$ is power associative, let $x = \sum_{i=0}^{15} \lambda_ie_i$ be an arbitrary sedenion with each $\lambda_i \in \bk$.  Then we have
\[
x^2 = a + bx \quad \text{with} \quad a = -\sum_{i=0}^{15} \lambda_i^2,\quad b = 2\lambda_0.
\]
It follows that
\[
x^2x = ab + (a + b^2)x = xx^2
\]
and
\[
(x^2x)x = (a^2 + ab^2) + (2ab + b^3)x = x^2x^2.
\]
Therefore, by \cite{albert1} (Theorem 2) (or, equivalently, Corollary \ref{cor2:main} below), $\bS$ is power associative.

In order to apply Corollary \ref{cor2:twist} to the power associative algebra $\bS$, consider, for example, the algebra (iso)morphism $\alpha  \colon \bS \to \bS$ with
\[
\alpha(1) = 1,\quad
\alpha(e_1) = e_5,\quad
\alpha(e_2) = e_7,\quad
\alpha(e_4) = e_6,\quad
\alpha(e_8) = e_{15}.
\]
The action of $\alpha$ on the other basis elements are then uniquely determined by the multiplication table above:
\[
\begin{split}
\alpha(e_3) &= e_2,\quad
\alpha(e_5) = -e_3,\quad
\alpha(e_6) = e_1,\quad
\alpha(e_7) = -e_4,\quad
\alpha(e_9) = -e_{10},\quad
\alpha(e_{10}) = -e_8,\\
\alpha(e_{11}) &= e_{13},\quad
\alpha(e_{12}) = e_9,\quad
\alpha(e_{13}) = -e_{12},\quad
\alpha(e_{14}) = -e_{14},\quad
\alpha(e_{15}) = e_{11}.
\end{split}
\]
By Corollary \ref{cor2:twist} there is a multiplicative Hom-power associative algebra
\[
\bS_\alpha = (\bS,\mualpha = \alpha\mu,\alpha),
\]
where $\mu$ is the multiplication in $\bS$.  Observe that $(\bS,\mualpha)$ is not a power associative algebra.  Indeed, with $x = 1 + e_1$, we have
\[
\mualpha(\mualpha(x,x),x) = 2(-e_3 + e_6),\quad
\mualpha(x,\mualpha(x,x)) = 2(-e_3 - e_6).
\]
Therefore, $(\bS,\mualpha)$ is not third power associative, and hence not power associative.

Other algebra isomorphisms on $\bS$ can be constructed following the same pattern as in the previous paragraph.  In more details, let us extend the terminology in \cite{baez} for the Cayley octonions and define a \textbf{basic quadruple} in $\bS$ to be a quadruple $(e_{i_1},e_{i_2},e_{i_3},e_{i_4})$ of distinct basis elements, none of which is $1$, such that
\[
e_{i_3} \not= \pm e_{i_1}e_{i_2} \quad\text{and}\quad
e_{i_4} \not\in \pm\{e_{i_1}e_{i_2}, e_{i_1}e_{i_3}, e_{i_2}e_{i_3}, (e_{i_1}e_{i_2})e_{i_3}\}.
\]
The assumptions on a basic quadruple guarantee that it generates $\bS$ as an algebra and that the other eleven non-identity basis elements can be written uniquely in terms of the given four.  Given another basic quadruple $(e_{j_1},e_{j_2},e_{j_3},e_{j_4})$, there is a unique algebra isomorphism $\beta \colon \bS \to \bS$ determined by
\[
\beta(e_{i_l}) = e_{j_l}
\]
for $1 \leq l \leq 4$.  Corollary \ref{cor2:twist} can then be applied to $\beta$, which yields a multiplicative Hom-power associative algebra $\bS_\beta = (\bS,\mu_\beta = \beta\mu,\beta)$.  In this context, the map $\alpha$ in the previous paragraph is the unique map that sends the basic quadruple $(e_{1},e_{2},e_{4},e_{8})$ to the basic quadruple $(e_{5},e_{7},e_{6},e_{15})$.
\qed
\end{example}

In Corollary \ref{cor1:twist} it was shown that every multiplicative Hom-power associative algebra gives rise to a sequence of multiplicative Hom-power associative algebras whose structure maps are twisted by its own twisting map.  In the rest of this section, we provide another construction of Hom-power associative algebras from a given one.

\begin{definition}
\label{def:alambda}
Let $(A,\mu,\alpha)$ be a Hom-algebra.  Define the Hom-algebra
\[
A(\lambda) = (A,\mulambda = \lambda\mu + (1-\lambda)\muop,\alpha),
\]
in which $\lambda \in \bk$ is an arbitrary scalar.
\end{definition}

The multiplication $\mulambda$ is an interpolation between $\mu$ and its opposite multiplication $\muop$.  In particular, we have $A(1) = A$ and $A(0) = (A,\muop,\alpha)$.  Moreover, with $\lambda = 1/2$ we have
\[
A(1/2) = A^+ = (A,\ast = (\mu + \muop)/2,\alpha),
\]
which is commutative.  Also note that $A$ is multiplicative if and only if $A(\lambda)$ is multiplicative for all $\lambda$.

The following result is the Hom-type analogue of \cite{albert2} (Theorem 1, p.581).

\begin{theorem}
\label{thm:alambda}
Let $(A,\mu,\alpha)$ be a Hom-algebra, and let $k \geq 2$ be an integer.  Then the following statements hold.
\begin{enumerate}
\item
$A$ is up to $k$th Hom-power associative if and only if $A(\lambda)$ is up to $k$th Hom-power associative for all $\lambda \in \bk$.
\item
If $A$ is up to $k$th Hom-power associative, then the $l$th Hom-powers in $A$ and $A(\lambda)$ coincide for $l \in \{1,\ldots,k\}$.
\end{enumerate}
\end{theorem}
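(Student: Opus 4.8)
The plan is to prove both statements simultaneously by a single induction on the Hom-power index, because the two statements are tightly coupled: knowing that the Hom-powers coincide up to level $l-1$ is exactly what one needs to compare the $l$th Hom-power associativity conditions in $A$ and $A(\lambda)$. The key structural observation is that $\mulambda = \lambda\mu + (1-\lambda)\muop$ shares its \emph{symmetric part} with $\mu$: writing $x \ast y = (\mu(x,y) + \muop(x,y))/2$, any expression of the form $\mulambda(u,v) + \mulambda(v,u)$ equals $\mu(u,v) + \mu(v,u)$, independent of $\lambda$. This is the engine that makes everything work, since Hom-power associativity is built out of exactly such symmetric combinations.

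\textbf{Step 1 (Hom-powers coincide, assuming up to $k$th Hom-power associativity).} I would first establish statement (2) by induction on $l$. The base cases $l=1,2$ are immediate from the definitions, since $x^1 = x$ and $x^2 = \mu(x,x) = \mulambda(x,x)$ (the latter because $\lambda + (1-\lambda) = 1$). For the inductive step, suppose the Hom-powers in $A$ and $A(\lambda)$ agree through level $l-1$, where $l \leq k$. The $l$th Hom-power in $A(\lambda)$ is $\mulambda(x^{l-1}, \alpha^{l-2}(x))$ (using that the lower Hom-powers already coincide). I would expand this as $\lambda\,\mu(x^{l-1}, \alpha^{l-2}(x)) + (1-\lambda)\,\mu(\alpha^{l-2}(x), x^{l-1})$. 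The first summand is $\lambda\, x^l$ by definition of the Hom-power in $A$. For the second summand, the hypothesis that $A$ is up to $k$th Hom-power associative — applied with the index $i = 1$ (or $i = l-1$) in identity \eqref{nhpa} — forces $\mu(\alpha^{l-2}(x), x^{l-1}) = \alpha^{l-l-1+1}(\cdots)$, i.e. it equals $x^l$ as well. Thus the second summand is $(1-\lambda)\,x^l$, and the two combine to $x^l$, completing the induction.

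\textbf{Step 2 (equivalence of up to $k$th Hom-power associativity).} With the Hom-powers identified, statement (1) becomes a comparison of identity \eqref{nhpa} for each $n \leq k$. The direction taking $A(\lambda)$ for all $\lambda$ to $A$ is free: specialize to $\lambda = 1$, which gives $A(1) = A$. For the forward direction, assume $A$ is up to $k$th Hom-power associative and fix $n \leq k$ and $i \in \{1,\ldots,n-1\}$. Using the coincidence of Hom-powers (Step 1), the right-hand side of \eqref{nhpa} for $A(\lambda)$ is $\mulambda\bigl(\alpha^{n-i-1}(x^i), \alpha^{i-1}(x^{n-i})\bigr)$, which expands into a $\lambda$-combination of $\mu(\alpha^{n-i-1}(x^i), \alpha^{i-1}(x^{n-i}))$ and its transpose. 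The untransposed term is $x^n$ by $n$th Hom-power associativity of $A$ at index $i$; the transposed term is $x^n$ by $n$th Hom-power associativity of $A$ at the complementary index $n-i$. Hence both terms equal $x^n$ and their $\lambda$-combination is again $x^n = (x^n)'$, as required.

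\textbf{Main obstacle.} The one genuinely delicate point is the bookkeeping in Step 1, namely justifying that $\mu(\alpha^{l-2}(x), x^{l-1}) = x^l$ using only \emph{up to} $k$th Hom-power associativity rather than assuming the conclusion at level $l$ itself. One must check that the relevant instance of \eqref{nhpa} is available at a \emph{strictly lower} level so that the induction is not circular — concretely, that the needed commuted identity follows from $(l-1)$st or lower Hom-power associativity, or alternatively reformulate Step 1 to carry the identity $\alpha^{l-2}(x)\, x^{l-1} = x^l$ as part of the inductive statement. I expect this interleaving of the two inductions to be the crux; once the correct induction hypothesis is pinned down, the algebra is routine expansion of $\mulambda$ into its two summands.
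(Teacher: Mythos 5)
Your proposal is correct and follows essentially the same route as the paper: first establish the coincidence of Hom-powers by induction, expanding $\mulambda$ into its two summands and using the $i=1$ instance of \eqref{nhpa} (valid by hypothesis since $l \leq k$), then verify \eqref{nhpa} for $A(\lambda)$ by invoking Hom-power associativity of $A$ at the complementary indices $i$ and $n-i$, with the converse direction free from $A(1) = A$. The circularity you flag as the main obstacle does not actually arise: the identity $\alpha^{l-2}(x)\,x^{l-1} = x^l$ is part of the standing hypothesis that $A$ is up to $k$th Hom-power associative (the $i=1$ case of \eqref{nhpa} at level $l \leq k$), not something being established in the induction, whose inductive statement only needs to carry the equality of Hom-powers $(x^{l-1})' = x^{l-1}$ --- exactly as in the paper's proof.
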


\begin{proof}
The ``if" part of the first assertion is obvious because $A(1) = A$.  To prove the other direction, suppose $A$ is up to $k$th Hom-power associative.  Pick any $\lambda \in \bk$.  Let us write $x^l$ and $(x^l)'$ for the $l$th Hom-powers in $A$ and $A(\lambda)$, respectively.  First we prove the second assertion, i.e.,
\begin{equation}
\label{alambdapower}
x^l = (x^l)'
\end{equation}
for $l \in \{1,\ldots,k\}$.  This is clearly true when $l=1$.  For the induction step, suppose $1 \leq l < k$.  Then
\[
\begin{split}
(x^{l+1})' &= \mulambda((x^l)',\alpha^{l-1}(x))\\
&= \mulambda(x^l,\alpha^{l-1}(x)) \quad\text{(by induction hypothesis)}\\
&= \lambda\mu(x^l,\alpha^{l-1}(x)) + (1-\lambda)\mu(\alpha^{l-1}(x),x^l)\\
&= (\lambda + 1 - \lambda)x^{l+1} \quad\text{(by the $(l+1)$st Hom-power associativity of $A$)}\\
&= x^{l+1}.
\end{split}
\]
This finishes the induction and proves \eqref{alambdapower}.

Now suppose $2\leq l \leq k$ and $1\leq i \leq l-1$.  To prove that $A(\lambda)$ is $l$th Hom-power associative, we compute as follows:
\[
\begin{split}
&\mulambda\left(\alpha^{l-i-1}((x^i)'),\alpha^{i-1}((x^{l-i})')\right)\\
&= \lambda\mu\left(\alpha^{l-i-1}(x^i),\alpha^{i-1}(x^{l-i})\right) + (1-\lambda)\mu\left(\alpha^{i-1}(x^{l-i}),\alpha^{l-i-1}(x^i)\right)
\quad\text{(by \eqref{alambdapower})}\\
&= (\lambda + 1 - \lambda)x^l \quad\text{($l$th Hom-power associativity of $A$)}\\
&= x^l = (x^l)' \quad\text{(by \eqref{alambdapower})}.
\end{split}
\]
This shows that $A(\lambda)$ is up to $k$th Hom-power associative, as desired.
\end{proof}

The following result is an immediate consequence of Theorem \ref{thm:alambda}.

\begin{corollary}
\label{cor1:alambda}
Let $(A,\mu,\alpha)$ be a Hom-algebra.  Then $A$ is Hom-power associative if and only if $A(\lambda)$ is Hom-power associative for all $\lambda \in \bk$.
\end{corollary}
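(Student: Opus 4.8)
The plan is to derive this directly from part (1) of Theorem \ref{thm:alambda} by unwinding the definition of Hom-power associativity. Recall from Definition \ref{def:hompower} that a Hom-algebra is Hom-power associative precisely when it is $n$th Hom-power associative for every $n \geq 2$, and that this in turn is the same as being up to $k$th Hom-power associative for every $k \geq 2$, since the phrase \textbf{up to $k$th} simply bundles together the $j$th Hom-power associativity conditions for all $j \in \{2,\ldots,k\}$. So the statement to be proved becomes: $A$ is up to $k$th Hom-power associative for all $k \geq 2$ if and only if $A(\lambda)$ is up to $k$th Hom-power associative for all $k \geq 2$ and all $\lambda \in \bk$.

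First I would fix an arbitrary integer $k \geq 2$ and invoke Theorem \ref{thm:alambda}(1), which asserts exactly that $A$ is up to $k$th Hom-power associative if and only if $A(\lambda)$ is up to $k$th Hom-power associative for all $\lambda \in \bk$. Quantifying both sides of this equivalence over all $k \geq 2$ then yields the desired biconditional, because the two universal quantifiers, one over $k$ and one over $\lambda$, commute. In particular, the forward direction ($A$ Hom-power associative forces each $A(\lambda)$ to be Hom-power associative) and the reverse direction (where one specializes to $\lambda = 1$ and uses $A(1) = A$, as noted after Definition \ref{def:alambda}) both fall out of this single equivalence.

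Since the entire argument is a formal consequence of Theorem \ref{thm:alambda}(1), there is no genuine obstacle to overcome. The only point requiring care is the bookkeeping of the two nested universal quantifiers, namely checking that the condition ``for all $k \geq 2$'' on the $A$-side matches the condition ``for all $k \geq 2$ and all $\lambda \in \bk$'' on the $A(\lambda)$-side. I expect the proof to occupy only a few lines once this reduction is spelled out.
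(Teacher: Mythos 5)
Your proposal is correct and matches the paper exactly: the paper gives no separate argument, stating only that the corollary ``is an immediate consequence of Theorem \ref{thm:alambda},'' which is precisely your reduction of Hom-power associativity to up to $k$th Hom-power associativity for all $k \geq 2$ followed by an application of Theorem \ref{thm:alambda}(1) for each $k$. Your quantifier bookkeeping (including the $\lambda = 1$, $A(1)=A$ observation for the reverse direction) is sound and is the intended content of the word ``immediate.''
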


The next result is the special case of Corollary \ref{cor1:alambda} with $\lambda = 1/2$.

\begin{corollary}
\label{cor2:alambda}
Let $A$ be a Hom-power associative algebra.  Then
\[
A^+ = (A,\ast = (\mu + \muop)/2,\alpha)
\]
is a commutative Hom-power associative algebra.
\end{corollary}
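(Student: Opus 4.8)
The plan is to obtain \texttt{Corollary \ref{cor2:alambda}} directly as the specialization $\lambda = 1/2$ of \texttt{Corollary \ref{cor1:alambda}}, supplemented by a short verification that the resulting multiplication $\ast$ is commutative. First I would observe that, by definition, $A^+ = A(1/2)$: setting $\lambda = 1/2$ in $\mulambda = \lambda\mu + (1-\lambda)\muop$ gives $\mu_{1/2} = \tfrac{1}{2}\mu + \tfrac{1}{2}\muop = \ast$, with the same twisting map $\alpha$, exactly as recorded in the discussion following \texttt{Definition \ref{def:alambda}}. Since $A$ is assumed Hom-power associative, \texttt{Corollary \ref{cor1:alambda}} applies and tells us that $A(\lambda)$ is Hom-power associative for \emph{all} $\lambda \in \bk$; taking $\lambda = 1/2$ yields that $A^+ = A(1/2)$ is Hom-power associative.

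It then remains only to check commutativity of $\ast$. This is immediate from the formula: $\ast^{op} = (\mu + \muop)^{op}/2 = (\muop + \mu)/2 = \ast$, using $(\muop)^{op} = \mu$, so $(A,\ast,\alpha)$ is a commutative Hom-algebra in the sense of \texttt{Definition \ref{def:homalg}(3)}. Combining the two observations gives that $A^+$ is a commutative Hom-power associative algebra, as claimed.

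I do not anticipate any genuine obstacle here, since the substantive content has already been carried by \texttt{Theorem \ref{thm:alambda}} and its corollary; the only care needed is the purely formal identification $A^+ = A(1/2)$ together with the one-line commutativity check. If one wanted a fully self-contained argument avoiding the appeal to \texttt{Corollary \ref{cor1:alambda}}, one could instead invoke \texttt{Theorem \ref{thm:alambda}} directly for each $k$, noting that $A$ Hom-power associative means $A$ is up to $k$th Hom-power associative for every $k \geq 2$, hence so is $A(1/2)$ by part (1) of that theorem; but the shortest route is simply to cite the already-established \texttt{Corollary \ref{cor1:alambda}} at $\lambda = 1/2$.
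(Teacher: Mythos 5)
Your proposal is correct and follows exactly the paper's route: the paper also obtains this corollary as the $\lambda = 1/2$ specialization of Corollary \ref{cor1:alambda}, with the identification $A^+ = A(1/2)$ and its commutativity already noted in the discussion after Definition \ref{def:alambda}. Your explicit one-line check that $\ast^{op} = \ast$ merely spells out what the paper leaves implicit.
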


\section{Third Hom-power associativity}
\label{sec:third}

In this section we give several characterizations of third Hom-power associativity.  Then we discuss some of their consequences, including the relationships between third Hom-power associative algebras and Hom-Lie admissible algebras.

Let us first recall the Hom-type analogue of the associator and some classes of Hom-algebras from \cite{ms}.

\begin{definition}
\label{def:homassociator}
Let $(A,\mu,\alpha)$ be a Hom-algebra.
\begin{enumerate}
\item
Define the \textbf{Hom-associator} $as_A \colon A^{\otimes 3} \to A$ by
\[
as_A = \mu \circ (\mu \otimes \alpha - \alpha \otimes \mu).
\]
\item
Define the \textbf{cyclic Hom-associator} $S_A \colon A^{\otimes 3} \to A$ by
\[
S_A = as_A \circ \cyclic,
\]
where $\pi$ is the cyclic permutation $(1~2~3)$.
\item
$A$ is called a \textbf{Hom-associative algebra} if $as_A = 0$.
\item
$A$ is called a \textbf{Hom-alternative algebra} if $as_A$ is totally anti-symmetric \cite{mak}.
\item
$A$ is called a \textbf{Hom-flexible algebra} if $as_A(x,y,x) = 0$ for all $x,y \in A$.
\end{enumerate}
\end{definition}

In terms of elements $x,y,z \in A$, we have
\begin{equation}
\label{homass}
as_A(x,y,z) = (xy)\alpha(z) - \alpha(x)(yz)
\end{equation}
and
\[
S_A(x,y,z) = as_A(x,y,z) + as_A(z,x,y) + as_A(y,z,x).
\]
When there is no danger of confusion, we will omit the subscript $A$ in the (cyclic) Hom-associator.

It is immediate from the definitions that every Hom-associative algebra is a Hom-alternative algebra, and every Hom-alternative algebra is a Hom-flexible algebra.  Also note that every commutative Hom-algebra is Hom-flexible.

The following result provides several characterizations of third Hom-power associativity and is the Hom-type analogue of \cite{albert1} (Lemma 1).  One consequence of this result (Proposition \ref{prop:n-1}) is crucial to the proof of the main result in section \ref{sec:maintheorem}.  We use the notation
\begin{equation}
\label{bullet}
x \bullet y = xy + yx
\end{equation}
for elements $x$ and $y$ in a Hom-algebra $A$.

\begin{theorem}
\label{thm:third}
Let $(A,\mu,\alpha)$ be a Hom-algebra.  Then the following statements are equivalent.
\begin{enumerate}
\item
$A$ is third Hom-power associative.
\item
The condition
\begin{equation}
\label{third:a}
as_A(x,x,x) = 0
\end{equation}
holds for all $x \in A$.
\item
The condition
\begin{equation}
\label{third:b}
\sum_{\sigma \in S_3} as_A \circ\sigma = 0
\end{equation}
holds.
\item
The condition
\begin{equation}
\label{third:c}
[x \bullet y,\alpha(z)] + [z \bullet x,\alpha(y)] + [y \bullet z,\alpha(x)] = 0
\end{equation}
holds for all $x,y,z \in A$, where $[,] = \mu - \muop$ is the commutator.
\item
The cyclic Hom-associator $S_A$ is totally anti-symmetric.
\end{enumerate}
\end{theorem}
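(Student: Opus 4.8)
The plan is to anchor the entire chain of equivalences to the single scalar identity \eqref{third:a} and to treat the passage to the multilinear statements as a polarization problem. First I would unwind the definition of third Hom-power associativity. For $n=3$ the only nontrivial instance of \eqref{nhpa} is $i=1$, which reads $x^3 = \alpha(x)x^2$, whereas $x^3 = x^2\alpha(x)$ holds by the very definition of the Hom-power. Since \eqref{homass} gives $as_A(x,x,x) = (xx)\alpha(x) - \alpha(x)(xx) = x^2\alpha(x) - \alpha(x)x^2$, third Hom-power associativity is \emph{literally} the assertion $as_A(x,x,x)=0$ for all $x$. This yields the equivalence of (1) and (2) with no computation.

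Next, for (2)$\Leftrightarrow$(3), the implication (3)$\Rightarrow$(2) is immediate: setting $x=y=z$ collapses all six terms of $\sum_{\sigma\in S_3}as_A\circ\sigma$ to $6\,as_A(x,x,x)$, and dividing by $6$ (legitimate since $\bk$ has characteristic $0$) recovers \eqref{third:a}. The converse is the genuine polarization step, and it is where I expect the main difficulty. Starting from $as_A(x,x,x)=0$, I would substitute $x\mapsto x+ty$ and use trilinearity; since the resulting polynomial in $t$ vanishes for all $t$, its linear coefficient vanishes, giving the once-polarized identity $as_A(x,x,y)+as_A(x,y,x)+as_A(y,x,x)=0$. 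Substituting $x\mapsto x+z$ into this identity and discarding the pure-$x$ and pure-$z$ parts (which vanish by the identity itself) leaves exactly the sum of $as_A$ over all six permutations of $(x,y,z)$, namely \eqref{third:b}. Characteristic $0$ is used twice here: to isolate the linear coefficient and in the earlier division by $6$.

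For (3)$\Leftrightarrow$(4) I would simply expand and collect. Writing out the six terms of $\sum_{\sigma\in S_3}as_A\circ\sigma$ via $as_A(u,v,w)=(uv)\alpha(w)-\alpha(u)(vw)$ and pairing $uv$ with $vu$ through the $\bullet$ of \eqref{bullet}, the six ``first'' terms assemble into $(x\bullet y)\alpha(z)+(x\bullet z)\alpha(y)+(y\bullet z)\alpha(x)$ and the six ``second'' terms into $-\alpha(z)(x\bullet y)-\alpha(y)(x\bullet z)-\alpha(x)(y\bullet z)$. Regrouping by matching factors and using $z\bullet x = x\bullet z$ shows that this total equals $[x\bullet y,\alpha(z)]+[z\bullet x,\alpha(y)]+[y\bullet z,\alpha(x)]$. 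Hence \eqref{third:b} and \eqref{third:c} are the same identity written two ways, and the equivalence is purely formal.

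Finally, for (3)$\Leftrightarrow$(5), the key observation is that $S_A$ is cyclically invariant by construction, since $S_A(x,y,z)=S_A(y,z,x)=S_A(z,x,y)$ is immediate from its definition as a cyclic sum. Consequently $S_A$ is totally anti-symmetric if and only if it changes sign under a single transposition, say $S_A(y,x,z)=-S_A(x,y,z)$: cyclic invariance propagates this sign change to the two remaining transpositions, and as $S_3$ is generated by a $3$-cycle and a transposition, full anti-symmetry follows. I would then split $\sum_{\sigma\in S_3}as_A\circ\sigma$ into its even and odd parts; the even part (over $\{Id,\pi,\pi^2\}$) is precisely $S_A(x,y,z)$ and the odd part is $S_A(y,x,z)$, so \eqref{third:b} reads $S_A(x,y,z)+S_A(y,x,z)=0$, which is exactly the transposition-antisymmetry characterizing (5). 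The main obstacle in the whole argument is thus the bidirectional polarization underlying (2)$\Rightarrow$(3); the remaining equivalences reduce to term-by-term expansion and symmetry bookkeeping.
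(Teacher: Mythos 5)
Your proposal is correct and follows essentially the same route as the paper's proof: the definitional identification of (1) with (2), the two-step polarization in characteristic $0$ for (2)$\Rightarrow$(3), the twelve-term expansion matching (3) with (4), and the decomposition $\sum_{\sigma \in S_3} as_A \circ \sigma = S_A \circ (Id + (1~2))$ together with the built-in cyclic invariance of $S_A$ for (3)$\Leftrightarrow$(5). The only cosmetic differences are that you extract the polarized identities as vanishing coefficients of a polynomial rather than by the paper's $\lambda = \pm 1$ substitution, and you close (5)$\Rightarrow$(3) directly via transposition antisymmetry instead of the paper's detour through $S_A(x,x,x) = 3\,as_A(x,x,x)$.
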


\begin{proof}
Third Hom-power associativity \eqref{nhpa} says
\[
x^3 = x^2\alpha(x) = \alpha(x)x^2
\]
for all $x \in A$.  Since the first equality holds by the definition of $x^3$, third Hom-power associativity is equivalent to
\[
0 = (xx)\alpha(x) - \alpha(x)(xx) = as(x,x,x),
\]
which is the condition \eqref{third:a}.

To show tat \eqref{third:a} and \eqref{third:b} are equivalent, first observe that
\[
as(x,x,x) = \frac{1}{6} \sum_{\sigma \in S_3} as \circ \sigma (x,x,x),
\]
so \eqref{third:b} implies \eqref{third:a}.  For the converse, we linearize \eqref{third:a} by replacing $x$ with $x + \lambda w$ for $\lambda \in \bk$ and $w \in A$.  Collecting the coefficients of the powers of $\lambda$, we obtain:
\[
\begin{split}
0 &= as(x + \lambda w, x + \lambda w, x + \lambda w)\\
&= as(x,x,x) + \lambda A(x,w) + \lambda^2 A(w,x) + \lambda^3 as(w,w,w)\\
&= \lambda A(x,w) + \lambda^2 A(w,x),
\end{split}
\]
where
\[
A(x,w) = as(x,x,w) + as(x,w,x) + as(w,x,x).
\]
Setting $\lambda = \pm 1$ and subtracting the results, we infer that \eqref{third:a} implies
\begin{equation}
\label{Axw}
A(x,w) = 0
\end{equation}
for all $x,w \in A$.  Next we replace $x$ with $x + \gamma z$ for $\gamma \in \bk$ and $z \in A$ in \eqref{Axw}.  Collecting the coefficients of the powers of $\gamma$, we obtain:
\[
\begin{split}
0 &= A(x + \gamma z,w)\\
&= as(x+ \gamma z,x+ \gamma z,w) + as(x+ \gamma z,w,x+ \gamma z) + as(w,x+ \gamma z,x+ \gamma z)\\
&= A(x,w) + \gamma B(w,x,z) + \gamma^2 A(z,w)\\
&= \gamma B(w,x,z),
\end{split}
\]
where
\[
B(x_1,x_2,x_3) = \sum_{\sigma \in S_3} as \circ \sigma (x_1,x_2,x_3).
\]
Setting $\gamma = 1$, we infer that \eqref{third:a} implies $B = 0$, which is the condition \eqref{third:b}.

To see that \eqref{third:b} and \eqref{third:c} are equivalent, apply \eqref{third:b} to a generic tensor $x \otimes y \otimes z \in A^{\otimes 3}$.  Expanding the left-hand side of \eqref{third:b} using \eqref{homass}, we obtain twelve terms.  If we expand the left-hand side of \eqref{third:c} using $[,] = \mu - \muop$, we obtain the same twelve terms.  Therefore, \eqref{third:b} and \eqref{third:c} are equivalent.

Finally, observe that we have
\[
\sum_{\sigma \in S_3} as \circ\sigma = S \circ (Id + (1~2)) = S \circ (Id + (2~3)) = S \circ (Id + (1~3)).
\]
So \eqref{third:b} implies that $S$ is anti-symmetric in any two of its three variables.  Since
\[
S = S \circ \pi = S \circ \pi^2,
\]
where $\pi$ is the cyclic permutation $(1~2~3)$, we infer that \eqref{third:b} implies that $S$ is totally anti-symmetric.  On the other hand, if $S$ is totally anti-symmetric, then
\[
0 = S(x,x,x) = 3 as(x,x,x),
\]
so $as(x,x,x) = 0$, which is the condition \eqref{third:a}.  Since \eqref{third:a} is equivalent to \eqref{third:b}, the proof is complete.
\end{proof}

We now discuss some consequences of Theorem \ref{thm:third}.  First we observe that the Hom-algebras in Definition \ref{def:homassociator} are all third Hom-power associative.

\begin{corollary}
\label{cor:flexible}
Every Hom-flexible algebra is third Hom-power associative.
\end{corollary}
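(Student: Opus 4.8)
The plan is to reduce the claim directly to the characterization of third Hom-power associativity given in Theorem \ref{thm:third}, specifically the equivalence with condition \eqref{third:a}. Recall that a Hom-flexible algebra is defined by the identity $as_A(x,y,x) = 0$ for all $x,y \in A$, and that Theorem \ref{thm:third} establishes that $A$ is third Hom-power associative if and only if $as_A(x,x,x) = 0$ for all $x \in A$.

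First I would observe that the Hom-flexibility identity $as_A(x,y,x) = 0$ holds for \emph{all} choices of $y$, so in particular it holds when $y = x$. Setting $y = x$ immediately yields $as_A(x,x,x) = 0$ for all $x \in A$, which is precisely condition \eqref{third:a}. By the equivalence of \eqref{third:a} with third Hom-power associativity established in Theorem \ref{thm:third}, the algebra $A$ is third Hom-power associative, completing the argument.

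There is essentially no obstacle here: the corollary is a one-line specialization once Theorem \ref{thm:third} is in hand. The only point worth stating explicitly is that Hom-flexibility is a stronger (more general in its quantifier over the middle slot) condition than what is needed, so the specialization $y = x$ is the entire content of the proof. One could optionally remark that since every Hom-associative algebra is Hom-alternative, and every Hom-alternative algebra is Hom-flexible (as noted in the excerpt immediately after Definition \ref{def:homassociator}), this corollary simultaneously shows that all three of these classes are third Hom-power associative, but that chain of inclusions is already recorded and need not be reproved.
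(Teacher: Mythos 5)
Your proposal is correct and follows exactly the paper's own argument: specialize the Hom-flexibility identity $as_A(x,y,x)=0$ at $y=x$ to obtain $as_A(x,x,x)=0$, which is condition \eqref{third:a}, and then invoke the equivalence in Theorem \ref{thm:third}. Nothing is missing, and the extra remark about the chain of inclusions is accurate but, as you note, already recorded in the paper.
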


\begin{proof}
A Hom-flexible algebra $A$ satisfies $as(x,y,x) = 0$ for all $x$ and $y$ in $A$, which in particular implies $as(x,x,x) = 0$.  So $A$ is third Hom-power associative by Theorem \ref{thm:third}.
\end{proof}

The next result is the Hom-version of \cite{albert1} (Lemma 2).  It says that up to $(n-1)$st Hom-power associativity implies part of $n$th Hom-power associativity.  It is a prelude to Theorem \ref{mainthm} and will play a crucial role in the proof of that result.

\begin{proposition}
\label{prop:n-1}
Let $(A,\mu,\alpha)$ be a multiplicative up to $(n-1)$st Hom-power associative algebra for some $n \geq 4$.  Then
\begin{equation}
\label{commute}
[\alpha^{\lambda - 1}(x^{n-\lambda}), \alpha^{n-\lambda-1}(x^\lambda)] = 0
\end{equation}
for all $x \in A$ and $\lambda \in \{1,\ldots,n-1\}$, where $[,] = \mu - \muop$ is the commutator.
\end{proposition}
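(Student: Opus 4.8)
The plan is to prove that each of the commutators
\[
C_\lambda := [\alpha^{\lambda-1}(x^{n-\lambda}),\, \alpha^{n-\lambda-1}(x^\lambda)]
\]
vanishes, for a fixed but arbitrary $x \in A$. Two cost-free reductions come first. Interchanging $\lambda$ and $n-\lambda$ in $[,]$ gives the antisymmetry $C_{n-\lambda} = -C_\lambda$, and when $n$ is even $C_{n/2}=0$ because it is the commutator of an element with itself. So it suffices to generate enough linear relations among the $C_\lambda$ to force them all to zero. Throughout I write $p = n-\lambda$ and $q=\lambda$, and I use multiplicativity silently in the form $\alpha^k(x^m) = (\alpha^k x)^m$ and $\alpha^k(uv)=\alpha^k(u)\alpha^k(v)$; this is the place where the multiplicativity hypothesis is genuinely needed.

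The key step is an \emph{associator representation} of $C_\lambda$, valid for $1 \le \lambda < n/2$ (equivalently $1\le q<p\le n-1$). I would set $a = \alpha^{p-2}(x^q)$ and $b = \alpha^{2q-2}(x^{p-q})$. Applying $p$th Hom-power associativity \eqref{nhpa} to the two splittings $i=q$ and $i=p-q$ of $x^p$ (both available since $p\le n-1$) and then applying $\alpha^{q-1}$, I obtain $ab = ba = \alpha^{q-1}(x^p)=\alpha^{\lambda-1}(x^{n-\lambda})$, while directly $\alpha(a) = \alpha^{p-1}(x^q)=\alpha^{n-\lambda-1}(x^\lambda)$. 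Thus the two factors of $C_\lambda$ are exactly $ab\ (=ba)$ and $\alpha(a)$, so by \eqref{homass}
\[
as(a,b,a) = (ab)\alpha(a) - \alpha(a)(ba) = C_\lambda .
\]
Notice this exhibits $C_\lambda$ as a single Hom-flexible-type associator, even though $A$ is not assumed Hom-flexible; the case $\lambda=1$ needs no separate treatment, as the formula then reads $as(\alpha^{n-3}(x),x^{n-2},\alpha^{n-3}(x))$.

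Next comes the \emph{recursion}. Since $n-1\ge 3$, $A$ is third Hom-power associative, so the linearized identity $as(a,a,b)+as(a,b,a)+as(b,a,a)=0$ established in the proof of Theorem \ref{thm:third} (see \eqref{Axw}) applies. I would expand $as(a,a,b)$ and $as(b,a,a)$ via \eqref{homass}; the only non-routine inputs are $aa = \alpha^{p-q-1}(x^{2q})$, which is $(2q)$th Hom-power associativity and is legitimate because $2q\le n-1$ when $\lambda<n/2$, together with $ab=ba=\alpha^{q-1}(x^p)$ from the previous step. After cancellation, and substituting $as(a,b,a)=C_\lambda$ once more, everything collapses to a single honest commutator:
\[
2C_\lambda = [\alpha^{2q-1}(x^{p-q}),\, \alpha^{p-q-1}(x^{2q})] = C_{2\lambda}.
\]
Using $C_{n-\lambda}=-C_\lambda$ (and the bookkeeping convention $C_0:=0$, consistent with $C_{n/2}=0$), this recursion extends to $2C_\lambda = C_{2\lambda \bmod n}$ for \emph{every} $\lambda$, and iterating gives $2^k C_\lambda = C_{2^k\lambda \bmod n}$. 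By the pigeonhole principle there are $i<j$ with $2^i\lambda \equiv 2^j\lambda \pmod n$, whence $2^iC_\lambda = 2^jC_\lambda$, i.e. $2^i(2^{j-i}-1)C_\lambda = 0$; since $\bk$ has characteristic $0$, this forces $C_\lambda = 0$, as desired.

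The main obstacle is entirely in the two middle steps: choosing $a$ and $b$ so that $C_\lambda$ becomes one flexible associator, and then tracking the $\alpha$-exponents so that every product rewritten along the way is a genuine instance of \eqref{nhpa} at a level $\le n-1$ (precisely the level-$p$ and level-$2q$ identities). Multiplicativity is indispensable exactly here, since it is what allows $\alpha^{q-1}$ to be distributed over the defining products of $x^p$ and what lets $aa$ be recognized as $\alpha^{p-q-1}(x^{2q})$. Once the recursion $2C_\lambda = C_{2\lambda}$ is secured, the concluding doubling-and-pigeonhole argument is elementary.
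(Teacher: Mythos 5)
Your proof is correct --- I checked the exponent bookkeeping at every step --- but it takes a genuinely different route from the paper's. The paper also sets $x(\lambda) = [\alpha^{\lambda-1}(x^{n-\lambda}), \alpha^{n-\lambda-1}(x^\lambda)]$ and exploits the antisymmetry $x(n-\lambda) = -x(\lambda)$, but its recursion is \emph{additive}: it applies the fully polarized trilinear identity \eqref{third:c} to the triple $\left(\alpha^{n-\lambda-2}(x^\lambda), \alpha^{n-3}(x), \alpha^{\lambda-1}(x^{n-\lambda-1})\right)$, using the $(\lambda+1)$st, $(n-\lambda)$th, and $(n-1)$st Hom-power identities, to get $x(\lambda+1) = x(\lambda) + x(1)$, hence $x(\lambda) = \lambda x(1)$ and then $-x(1) = x(n-1) = (n-1)x(1)$, so $x(1) = 0$ in one line. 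You instead use only the two-variable partial linearization \eqref{Axw} (a weaker consequence of third Hom-power associativity than \eqref{third:c}), applied to $a = \alpha^{p-2}(x^q)$ and $b = \alpha^{2q-2}(x^{p-q})$, and you invoke \eqref{nhpa} at levels $n-\lambda$ and $2\lambda$ rather than $\lambda+1$, $n-\lambda$, $n-1$ (all within the up-to-$(n-1)$st hypothesis, since $\lambda < n/2$ gives $2\lambda \le n-1$). This produces the \emph{multiplicative} recursion $2C_\lambda = C_{2\lambda}$, which forces your mod-$n$ doubling-orbit and pigeonhole endgame in place of the paper's one-line finish. Your identification of $C_\lambda$ as a single flexible-type Hom-associator $as(a,b,a)$ --- via $ab = ba = \alpha^{\lambda-1}(x^{n-\lambda})$ from the two splittings $i=q$, $i=p-q$ of $x^p$, and $\alpha(a) = \alpha^{n-\lambda-1}(x^\lambda)$ --- is a structural observation absent from the paper, and it is precisely what lets the two-variable identity suffice; I verified that the expansion indeed collapses to $0 = [aa,\alpha(b)] + 2C_\lambda$ with $[\alpha(b),aa] = C_{2\lambda}$, including the degenerate case $p = 2q$ where $a = b$. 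The trade-off: the paper's additive recursion treats all $\lambda$ uniformly and closes immediately, while yours economizes on the linearization but pays with the orbit argument --- though your extension $2C_\lambda = C_{2\lambda \bmod n}$ via $C_{n-\lambda} = -C_\lambda$, $C_{n/2} = 0$, and the convention $C_0 := 0$ checks out, as does the final division by $2^i(2^{j-i}-1)$ in characteristic $0$.
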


\begin{proof}
Pick $x \in A$ and define
\[
x(\lambda) = [\alpha^{\lambda - 1}(x^{n-\lambda}), \alpha^{n-\lambda-1}(x^\lambda)].
\]
We must show that $x(\lambda) = 0$ for $\lambda \in \{1,\ldots,n-1\}$.  First we claim that
\begin{equation}
\label{star}
x(\lambda+1) = x(\lambda) + x(1)
\end{equation}
for $\lambda \in \{1,\ldots,n-2\}$.  To prove \eqref{star}, note that, with the notation in \eqref{bullet}, the up to $(n-1)$st Hom-power associativity of $A$ implies:
\begin{equation}
\label{123}
\begin{split}
2x^{\lambda+1} &= x^\lambda \bullet \alpha^{\lambda-1}(x),\\
2x^{n-\lambda} &= x^{n-\lambda-1} \bullet \alpha^{n-\lambda-2}(x),\\
2x^{n-1} &= \alpha^{n-\lambda-2}(x^\lambda) \bullet \alpha^{\lambda-1}(x^{n-\lambda-1}).
\end{split}
\end{equation}
Applying $\alpha^{n-\lambda-2}$ and $\alpha^{\lambda-1}$ to the first and the second equalities in \eqref{123}, respectively, and using multiplicativity, we obtain:
\begin{equation}
\label{12'}
\begin{split}
2\alpha^{n-\lambda-2}(x^{\lambda+1}) &= \alpha^{n-\lambda-2}(x^\lambda) \bullet \alpha^{n-3}(x),\\
2\alpha^{\lambda-1}(x^{n-\lambda}) &= \alpha^{\lambda-1}(x^{n-\lambda-1}) \bullet \alpha^{n-3}(x).
\end{split}
\end{equation}
In the condition \eqref{third:c} in Theorem \ref{thm:third}, replace $(x,y,z)$ by $\left(\alpha^{n-\lambda-2}(x^\lambda), \alpha^{n-3}(x), \alpha^{\lambda-1}(x^{n-\lambda-1})\right)$ and use \eqref{12'} and the last equality in \eqref{123}.  The result is:
\[
\begin{split}
0 &= [2\alpha^{n-\lambda-2}(x^{\lambda+1}), \alpha^\lambda(x^{n-\lambda-1})] + [2x^{n-1}, \alpha^{n-2}(x)] + [2\alpha^{\lambda-1}(x^{n-\lambda}), \alpha^{n-\lambda-1}(x^\lambda)]\\
&= -2x(\lambda + 1) + 2x(1) + 2x(\lambda),
\end{split}
\]
from which we obtain the desired condition \eqref{star}.

From \eqref{star} it follows that
\begin{equation}
\label{starone}
x(\lambda) = \lambda x(1)
\end{equation}
for $\lambda \in \{1, \ldots, n-1\}$.  Indeed, \eqref{starone} is trivially true when $\lambda = 1$.  Then we obtain \eqref{starone} by a finite induction, using \eqref{star} in the inductive step.

Since the commutator is anti-symmetric, we have
\[
x(n-\lambda) = -x(\lambda).
\]
Combining the previous line with the $\lambda = n-1$ case of \eqref{starone}, we have
\[
-x(1) = x(n-1) = (n-1)x(1),
\]
which implies
\[
x(1) = 0.
\]
Putting this back into \eqref{starone} completes the proof of the Proposition.
\end{proof}

In the rest of this section, we discuss the relationships between third Hom-power associative algebras and Hom-Lie admissible algebras.  These results are not used in later sections.

Let us first recall the Hom-type analogues of Lie (admissible) algebras from \cite{ms}.

\begin{definition}
\label{def:homlie}
Let $(A,\mu,\alpha)$ be a Hom-algebra.
\begin{enumerate}
\item
Define its \textbf{Hom-Jacobian} $J_A \colon A^{\otimes 3} \to A$ as
\[
J_A = \mu \circ (\mu \otimes \alpha) \circ (Id + \pi + \pi^2),
\]
where $\pi$ is the cyclic permutation $(1~2~3)$.
\item
$A$ is called a \textbf{Hom-Lie algebra} if $\mu$ is anti-symmetric and if $J_A = 0$ (Hom-Jacobi identity).
\item
Define the \textbf{commutator Hom-algebra} as $A^- = (A,[,],\alpha)$, where $[,] = \mu - \muop$ is the commutator.
\item
$A$ is called a \textbf{Hom-Lie admissible algebra} if $A^-$ is a Hom-Lie algebra, i.e., $J_{A^-} = 0$.
\end{enumerate}
\end{definition}

One way to understand Hom-Lie admissible algebras is through the classes of $G$-Hom-associative algebras, which we now recall from \cite{ms}.  The classes of $G$-associative algebras were first defined in \cite{gm}.

\begin{definition}
\label{def:G}
Let $(A,\mu,\alpha)$ be a Hom-algebra and $G$ be a subgroup of the symmetric group $S_3$.  Then $A$ is called a \textbf{$G$-Hom-associative algebra} if it satisfies
\begin{equation}
\label{gass}
\sum_{\sigma \in G} \sign as_A \circ \sigma = 0,
\end{equation}
where $as_A$ is the Hom-associator (Definition \ref{def:homassociator}).
\end{definition}

Clearly Hom-associative algebras are exactly the $\{Id\}$-Hom-associative algebras.  Let $A_3$ be the subgroup $\{Id,\pi,\pi^2\}$ of $S_3$ generated by the cyclic permutation $\pi = (1~2~3)$.  Then Hom-Lie algebras are exactly the $A_3$-Hom-associative algebras whose multiplications are anti-symmetric.  Moreover, as shown in \cite{ms}, every $G$-Hom-associative algebra is a Hom-Lie admissible algebra.  Conversely, a Hom-Lie admissible algebra is an $S_3$-Hom-associative algebra.  Examples of and construction results along the lines of Corollary \ref{cor2:twist} for $G$-Hom-associative algebras can be found in \cite{yau2}.

It follows from Corollary \ref{cor:flexible} that Hom-associative algebras are all third Hom-power associative.  So some Hom-Lie admissible algebras are third Hom-power associative.  This raises the question:
\begin{quote}
Exactly which Hom-Lie admissible algebras are third Hom-power associative?
\end{quote}
We answer this question in Theorem \ref{thm:a3}.  The answer turns out to be the class of $A_3$-Hom-associative algebras.

The following two preliminary results are used in the proof of Theorem \ref{thm:a3}.  The next result describes $A_3$-Hom-associative algebras in terms of its cyclic Hom-associator.

\begin{lemma}
\label{lem1:A3}
Let $(A,\mu,\alpha)$ be a Hom-algebra.  Then $A$ is an $A_3$-Hom-associative algebra if and only if $S_A = 0$, where $S_A$ is the cyclic Hom-associator.
\end{lemma}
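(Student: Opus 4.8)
The plan is to prove this by simply unwinding the two definitions and checking a signature computation. Recall that the $A_3$-Hom-associative condition \eqref{gass} for the subgroup $A_3 = \{Id, \pi, \pi^2\}$ reads
\[
\sum_{\sigma \in A_3} \sign\, as_A \circ \sigma = 0,
\]
whereas the cyclic Hom-associator is by definition $S_A = as_A \circ \cyclic$. The whole content of the lemma is that these two expressions agree, which will follow once I observe that the signs $\sign$ are all trivial on $A_3$.

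First I would record the relevant signatures. The generator $\pi = (1~2~3)$ is a $3$-cycle, hence an even permutation (it factors as a product of two transpositions, e.g. $\pi = (1~3)(1~2)$), so $\varepsilon(\pi) = 0$. Consequently $\varepsilon(\pi^2) = 0$ as well, and of course $\varepsilon(Id) = 0$. Therefore $\sign = 1$ for every $\sigma \in A_3$, and the signed sum in \eqref{gass} loses all its signs.

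With that in hand, the defining condition for an $A_3$-Hom-associative algebra becomes
\[
as_A \circ Id + as_A \circ \pi + as_A \circ \pi^2 = as_A \circ \cyclic = S_A.
\]
Thus $A$ is $A_3$-Hom-associative, i.e. this sum vanishes, precisely when $S_A = 0$, which is exactly the claim.

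There is really no substantive obstacle here; the only thing to be careful about is the signature bookkeeping, namely making explicit that a $3$-cycle is even so that the signed sum defining \eqref{gass} coincides with the unsigned cyclic sum built into $S_A = as_A \circ \cyclic$. Once that is noted, the equivalence is immediate in both directions.
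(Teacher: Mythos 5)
Your proof is correct and follows exactly the paper's own argument: note that every element of $A_3 = \{Id,\pi,\pi^2\}$ is even, so the signed sum in \eqref{gass} reduces to the unsigned sum $as_A \circ \cyclic = S_A$, making the two vanishing conditions identical. Your additional detail (factoring $\pi = (1~3)(1~2)$ to verify evenness) is a harmless elaboration of the same one-line observation.
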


\begin{proof}
Every element in the subgroup $A_3 = \{Id,\pi,\pi^2\}$ of $S_3$ is even.  So we have
\[
\sum_{\sigma \in A_3} \sign as_A \circ \sigma = as_A \circ (Id + \pi + \pi^2) = S_A,
\]
which proves the Lemma.
\end{proof}

The next result describes the Hom-Jacobian of the commutator Hom-algebra in terms of the cyclic Hom-associator.

\begin{lemma}
\label{lem2:A3}
Let $(A,\mu,\alpha)$ be a Hom-algebra and $A^- = (A,[,],\alpha)$ be its commutator Hom-algebra.  Then
\[
J_{A^-} = S_A \circ (Id - (2~3)),
\]
where $J_{A^-}$ is the Hom-Jacobian of $A^-$.
\end{lemma}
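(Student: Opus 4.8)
The plan is to verify the asserted equality of maps $A^{\otimes 3} \to A$ by evaluating both sides on a generic tensor $x \otimes y \otimes z$ and matching the resulting twelve monomials in $\mu$ and $\alpha$. Throughout I use the element-wise formula $as_A(a,b,c) = (ab)\alpha(c) - \alpha(a)(bc)$ from \eqref{homass}, the bracket $[u,v] = uv - vu$, and the convention fixed by the displayed formula for $S_A$, namely that $\pi = (1~2~3)$ sends $(x,y,z)$ to $(z,x,y)$ and $\pi^2$ sends it to $(y,z,x)$. Note that no hypothesis on $A$ (such as multiplicativity) is needed, since this is a purely formal identity of operations.

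First I would expand the left-hand side. Because $J_{A^-} = [,] \circ ([,] \otimes \alpha) \circ \cyclic$, evaluation on $x \otimes y \otimes z$ yields the cyclic sum
\[
J_{A^-}(x,y,z) = [[x,y],\alpha(z)] + [[z,x],\alpha(y)] + [[y,z],\alpha(x)].
\]
Expanding each double bracket via $[u,v] = uv - vu$ produces four monomials per term; for instance
\[
[[x,y],\alpha(z)] = (xy)\alpha(z) - (yx)\alpha(z) - \alpha(z)(xy) + \alpha(z)(yx).
\]

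Next I would expand the right-hand side. Applying $Id - (2~3)$ first gives $S_A(x,y,z) - S_A(x,z,y)$, and then using $S_A(a,b,c) = as_A(a,b,c) + as_A(c,a,b) + as_A(b,c,a)$ together with \eqref{homass} produces six associators, hence twelve monomials. The cleanest way to match the two lists is to sort all monomials by which of the three variables carries the twisting map $\alpha$: the four monomials containing $\alpha(z)$ reassemble into $[[x,y],\alpha(z)]$, those containing $\alpha(y)$ into $[[z,x],\alpha(y)]$, and those containing $\alpha(x)$ into $[[y,z],\alpha(x)]$. For example, on the right-hand side the terms $(xy)\alpha(z)$ from $as_A(x,y,z)$ and $-(yx)\alpha(z)$ from $-as_A(y,x,z)$ combine to $[x,y]\alpha(z)$, while $-\alpha(z)(xy)$ from $as_A(z,x,y)$ and $\alpha(z)(yx)$ from $-as_A(z,y,x)$ combine to $-\alpha(z)[x,y]$; together these four are precisely the $\alpha(z)$-monomials of $[[x,y],\alpha(z)]$ displayed above. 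The remaining two groups follow by the identical argument under cyclic relabelling.

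The only real obstacle is bookkeeping: one must stay consistent about how $\pi$ and $(2~3)$ act on a tensor, and keep the signs straight when expanding both the double brackets and the associators. Once the twelve monomials on each side are listed and grouped by the $\alpha$-bearing variable, the identity $J_{A^-} = S_A \circ (Id - (2~3))$ is immediate. As a sanity check, the resulting common expression equals $\sum_{\sigma \in S_3} \sign as_A \circ \sigma$, the signed sum of all six permuted associators, which provides an independent confirmation of the grouping.
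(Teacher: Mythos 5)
Your proposal is correct and is essentially the paper's own proof: the paper likewise verifies the identity by expanding both sides in terms of $\mu$ and matching the twelve monomials, which you carry out explicitly (and correctly, including the convention that $\pi$ sends $x \otimes y \otimes z$ to $z \otimes x \otimes y$, consistent with the paper's displayed formula for $S_A$). Your grouping by the $\alpha$-bearing variable and the sanity check against $\sum_{\sigma \in S_3} \sign as_A \circ \sigma$ are just a more detailed write-up of the same computation.
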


\begin{proof}
Expanding in terms of $\mu$, a simple calculation shows that the twelve terms in $J_{A^-}$ are exactly the same as the twelve terms in $S_A \circ (Id - (2~3))$.
\end{proof}

The following result says that, within the class of Hom-algebras, the intersection of the classes of third Hom-power associative algebras and of Hom-Lie admissible algebras is equal to the class of $A_3$-Hom-associative algebras.

\begin{theorem}
\label{thm:a3}
Let $(A,\mu,\alpha)$ be a Hom-algebra.  Then $A$ is an $A_3$-Hom-associative algebra if and only if it is both a third Hom-power associative algebra and a Hom-Lie admissible algebra.
\end{theorem}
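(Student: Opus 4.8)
The plan is to prove both implications by reducing everything to the cyclic Hom-associator $S_A$ and exploiting the two preliminary lemmas together with the characterization of third Hom-power associativity from Theorem \ref{thm:third}. By Lemma \ref{lem1:A3}, being $A_3$-Hom-associative is equivalent to the single identity $S_A = 0$. So the entire theorem amounts to showing that
\[
S_A = 0 \quad\Longleftrightarrow\quad \left(\text{$S_A$ is totally anti-symmetric}\right)\ \text{and}\ \left(J_{A^-} = 0\right),
\]
where I have used that, by the equivalence of (1) and (5) in Theorem \ref{thm:third}, third Hom-power associativity is the same as total anti-symmetry of $S_A$.

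For the forward direction, suppose $A$ is $A_3$-Hom-associative, so $S_A = 0$ by Lemma \ref{lem1:A3}. Then $S_A$ is trivially totally anti-symmetric, so $A$ is third Hom-power associative by Theorem \ref{thm:third}. Moreover, Lemma \ref{lem2:A3} gives $J_{A^-} = S_A \circ (Id - (2~3)) = 0$, so $A^-$ satisfies the Hom-Jacobi identity; since $[,]$ is manifestly anti-symmetric, $A^-$ is a Hom-Lie algebra and hence $A$ is Hom-Lie admissible. This direction is immediate.

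The converse is where the real content lies, and it will be the main obstacle. Assume $A$ is both third Hom-power associative and Hom-Lie admissible. Third Hom-power associativity gives, via Theorem \ref{thm:third}, that $S_A$ is totally anti-symmetric; Hom-Lie admissibility gives $J_{A^-} = 0$, which by Lemma \ref{lem2:A3} means $S_A \circ (Id - (2~3)) = 0$, i.e. $S_A = S_A \circ (2~3)$. So $S_A$ is invariant under the transposition $(2~3)$. But total anti-symmetry forces $S_A = -\,S_A \circ (2~3)$. Combining the two relations yields $S_A = -S_A$, hence $2 S_A = 0$; since we work in characteristic $0$, this gives $S_A = 0$. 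By Lemma \ref{lem1:A3}, $A$ is $A_3$-Hom-associative, completing the proof. The only delicate point is making sure the two structural inputs are translated correctly into statements about $S_A$ under the group action of $S_3$ — that is, pairing ``anti-symmetric under $(2~3)$'' against ``symmetric under $(2~3)$'' — after which the characteristic-$0$ hypothesis closes the argument cleanly.
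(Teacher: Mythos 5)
Your proof is correct and follows essentially the same route as the paper: both directions reduce to the identity $S_A = 0$ via Lemma \ref{lem1:A3}, and in the converse you play the total anti-symmetry of $S_A$ (equivalence (1)$\Leftrightarrow$(5) of Theorem \ref{thm:third}), giving $S_A = -S_A \circ (2~3)$, against $J_{A^-} = S_A \circ (Id - (2~3)) = 0$ from Lemma \ref{lem2:A3}, giving $S_A = S_A \circ (2~3)$, to conclude $2S_A = 0$ and hence $S_A = 0$ in characteristic $0$ --- exactly the paper's computation $S_A = \frac{1}{2}(S_A - S_A\circ(2~3)) = \frac{1}{2}J_{A^-} = 0$ in rearranged form. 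The only cosmetic difference is that in the forward direction you derive Hom-Lie admissibility directly from Lemma \ref{lem2:A3} rather than citing \cite{ms} as the paper does, which is a perfectly valid (and slightly more self-contained) substitution.
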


\begin{proof}
First assume that $A$ is an $A_3$-Hom-associative algebra.  Then it is Hom-Lie admissible \cite{ms}.  Moreover, by Lemma \ref{lem1:A3} we have $S_A = 0$.  In particular, $S_A$ is totally anti-symmetric, so Theorem \ref{thm:third} implies that $A$ is third Hom-power associative.

Conversely, assume that $A$ is both a third Hom-power associative algebra and a Hom-Lie admissible algebra.  Since $A$ is third Hom-power associative, by Theorem \ref{thm:third}, $S_A$ is totally anti-symmetric, which implies
\[
S_A = -S_A \circ (2~3).
\]
The previous line and Lemma \ref{lem2:A3} imply
\[
S_A = \frac{1}{2}\left(S_A - S_A \circ (2~3)\right)
= \frac{1}{2}J_{A^-}.
\]
Since $A$ is Hom-Lie admissible, we have $J_{A^-} = 0$, and hence $S_A = 0$. Lemma \ref{lem1:A3} now says that $A$ is $A_3$-Hom-associative.
\end{proof}

\section{Fourth Hom-power associativity}
\label{sec:fourth}

The purpose of this section is to study fourth Hom-power associativity.  Using certain Hom-alternative and Hom-Jordan algebras, we also demonstrate that in a Hom-power associative algebra, it is not necessarily the case that every element generates a Hom-associative sub-Hom-algebra.

Let us first record some consequences of third Hom-power associativity regarding fourth Hom-power associativity.  The following result is the special case of Proposition \ref{prop:n-1} with $n=4$ and $\lambda=1$.  It says that up to third Hom-power associativity implies part of fourth Hom-power associativity.

\begin{corollary}
\label{cor1:third}
Let $(A,\mu,\alpha)$ be a multiplicative third Hom-power associative algebra.  Then
\begin{equation}
\label{eq:cor1third}
x^4 = x^3\alpha^2(x) = \alpha^2(x)x^3
\end{equation}
for all $x \in A$.
\end{corollary}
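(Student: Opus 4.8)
The plan is to recognize the statement as a direct specialization of Proposition \ref{prop:n-1}, so that essentially no new computation is required. First I would record that the first equality in \eqref{eq:cor1third} is a tautology: by the definition of the fourth Hom-power \eqref{hompower}, we have $x^4 = x^3\alpha^{4-2}(x) = x^3\alpha^2(x)$, with no hypothesis needed. Thus the entire content of the corollary lies in the second equality $x^3\alpha^2(x) = \alpha^2(x)x^3$, i.e., in showing that $x^3$ and $\alpha^2(x)$ commute.

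To obtain this, I would check that the hypothesis of the corollary matches that of Proposition \ref{prop:n-1} when $n=4$. The proposition requires $A$ to be multiplicative and up to $(n-1)$st Hom-power associative; with $n=4$ this is up to third Hom-power associativity, which, since every Hom-algebra is automatically second Hom-power associative, is exactly third Hom-power associativity. Hence a multiplicative third Hom-power associative $A$ satisfies the proposition's hypotheses. Applying the conclusion \eqref{commute} with $n=4$ and $\lambda=1$, and using $\alpha^0 = Id$ together with $x^1 = x$, gives
\[
[x^3, \alpha^2(x)] = [\alpha^{0}(x^{3}), \alpha^{4-1-1}(x^{1})] = 0.
\]
Since $[,] = \mu - \muop$, this vanishing is precisely the identity $x^3\alpha^2(x) = \alpha^2(x)x^3$, which is the second equality.

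There is no real obstacle here; the only thing to verify is the bookkeeping of the exponents, namely that the indices $\alpha^{\lambda-1}$ and $\alpha^{n-\lambda-1}$ appearing in \eqref{commute} collapse correctly at $n=4$, $\lambda=1$ to $\alpha^{0} = Id$ and $\alpha^{2}$, respectively. Combining the tautological first equality with the commutator identity then yields \eqref{eq:cor1third}, completing the proof.
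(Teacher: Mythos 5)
Your proposal is correct and matches the paper exactly: the paper introduces this corollary as precisely the special case of Proposition \ref{prop:n-1} with $n=4$ and $\lambda=1$, and your verification of the hypotheses (noting that second Hom-power associativity is automatic, so up to third equals third) and of the exponent bookkeeping $\alpha^{\lambda-1}=\alpha^0=Id$, $\alpha^{n-\lambda-1}=\alpha^2$ is the whole argument. Nothing is missing.
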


The following consequence of Corollary \ref{cor1:third} provides a characterization of up to fourth Hom-power associativity in terms of two identities.

\begin{corollary}
\label{cor2:third}
Let $(A,\mu,\alpha)$ be a multiplicative Hom-algebra.  Then the following statements are equivalent.
\begin{enumerate}
\item
$A$ satisfies
\begin{equation}
\label{uptofourth}
x^2\alpha(x) = \alpha(x)x^2 \quad\text{and}\quad
x^4 = \alpha(x^2)\alpha(x^2)
\end{equation}
for all $x \in A$.
\item
$A$ satisfies
\begin{equation}
\label{uptofourth'}
as(x,x,x) =  0 = as(x^2,\alpha(x),\alpha(x))
\end{equation}
for all $x \in A$.
\item
$A$ is up to fourth Hom-power associative.
\end{enumerate}
\end{corollary}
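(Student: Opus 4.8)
The plan is to prove the three-way equivalence by identifying each of the two identities in \eqref{uptofourth} with the vanishing of a single Hom-associator, and then matching both with the individual cases of third and fourth Hom-power associativity. Almost all of the real content is already contained in Theorem \ref{thm:third} and Corollary \ref{cor1:third}; what remains is bookkeeping with the definitions \eqref{hompower} of $x^3$ and $x^4$, the definition \eqref{homass} of the Hom-associator, and multiplicativity.

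First I would establish $(1) \Leftrightarrow (2)$. Expanding \eqref{homass} gives $as(x,x,x) = x^2\alpha(x) - \alpha(x)x^2$, so the first identity in \eqref{uptofourth} is literally $as(x,x,x) = 0$. For the second identity I would compute $as(x^2,\alpha(x),\alpha(x))$ directly from \eqref{homass}: the first term is $(x^2\alpha(x))\alpha^2(x) = x^3\alpha^2(x) = x^4$ by the definitions of $x^3$ and $x^4$, while the second term is $\alpha(x^2)(\alpha(x)\alpha(x)) = \alpha(x^2)\alpha(x^2)$ by multiplicativity, since $\alpha(x)\alpha(x) = \alpha(xx) = \alpha(x^2)$. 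Hence $as(x^2,\alpha(x),\alpha(x)) = x^4 - \alpha(x^2)\alpha(x^2)$, so the second identity in \eqref{uptofourth} is exactly $as(x^2,\alpha(x),\alpha(x)) = 0$. This yields $(1) \Leftrightarrow (2)$.

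Next I would connect these identities to Hom-power associativity. By Theorem \ref{thm:third}, the condition $as(x,x,x) = 0$ is equivalent to third Hom-power associativity, so the first identity in \eqref{uptofourth} is precisely third Hom-power associativity. For fourth Hom-power associativity I would unwind \eqref{nhpa} with $n = 4$ into its three cases $i = 1,2,3$: the $i = 3$ case $x^4 = x^3\alpha^2(x)$ is automatic from the definition of $x^4$; the $i = 2$ case is exactly $x^4 = \alpha(x^2)\alpha(x^2)$, the second identity in \eqref{uptofourth}; and the $i = 1$ case is $x^4 = \alpha^2(x)x^3$. The crucial point is that this last case is not an independent condition: by Corollary \ref{cor1:third}, once $A$ is multiplicative and third Hom-power associative, the equality $x^4 = \alpha^2(x)x^3$ holds automatically.

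With these pieces in place the two implications are immediate. For $(3) \Rightarrow (1)$: up to fourth Hom-power associativity includes third Hom-power associativity, giving the first identity in \eqref{uptofourth}, and it includes the $i = 2$ case of fourth Hom-power associativity, which is the second identity. For $(1) \Rightarrow (3)$: the first identity gives $as(x,x,x) = 0$, hence third Hom-power associativity by Theorem \ref{thm:third}; then Corollary \ref{cor1:third} supplies the $i = 1$ case of fourth Hom-power associativity, the $i = 3$ case is automatic, and the second identity supplies the $i = 2$ case, so $A$ is fourth Hom-power associative. Together with third and the automatic second Hom-power associativity, $A$ is up to fourth Hom-power associative. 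The only step requiring genuine care is the expansion of $as(x^2,\alpha(x),\alpha(x))$, where multiplicativity must be invoked to collapse $\alpha(x)\alpha(x)$ to $\alpha(x^2)$; every other step is a direct appeal to \eqref{hompower}, Theorem \ref{thm:third}, or Corollary \ref{cor1:third}.
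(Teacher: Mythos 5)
Your proposal is correct and follows essentially the same route as the paper: the equivalence of \eqref{uptofourth} and \eqref{uptofourth'} by direct expansion of the two Hom-associators using multiplicativity, and the equivalence with up to fourth Hom-power associativity by noting that the $i=3$ case of \eqref{nhpa} is definitional, the $i=2$ case is the second identity, and the $i=1$ case $x^4 = \alpha^2(x)x^3$ is supplied by Corollary \ref{cor1:third} once third Hom-power associativity holds. Your explicit case-by-case unwinding of \eqref{nhpa} for $n=4$ is slightly more detailed than the paper's chain of equalities, but the argument is identical in substance.
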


\begin{proof}
The identity $as(x,x,x) = 0$ is equivalent to the first identity in \eqref{uptofourth} by the definition of the Hom-associator.  Moreover, since $A$ is multiplicative, the identity $as(x^2,\alpha(x),\alpha(x)) = 0$ is equivalent to:
\[
\begin{split}
0 &= as(x^2,\alpha(x),\alpha(x))\\
&= (x^2\alpha(x))\alpha^2(x) - \alpha(x^2)(\alpha(x)\alpha(x))\\
&= x^4 - \alpha(x^2)\alpha(x^2).
\end{split}
\]
Therefore, \eqref{uptofourth} and \eqref{uptofourth'} are equivalent.

Next we show the equivalence between the first and the last statements.  The first equality in \eqref{uptofourth} is third Hom-power associativity, and the second equality is part of fourth Hom-power associativity.  Conversely, suppose \eqref{uptofourth} is true.  Then $A$ is multiplicative third Hom-power associative.  By Corollary \ref{cor1:third} we have
\[
x^4 = x^3\alpha^2(x) = \alpha(x^2)\alpha(x^2) = \alpha^2(x)x^3,
\]
which means that $A$ is fourth Hom-power associative.
\end{proof}

In Corollary \ref{cor1:main} below, we will see that the Hom-algebras in Corollary \ref{cor2:third} are, in fact, Hom-power associative.

Next we discuss some classes of fourth Hom-power associative algebras.

\begin{definition}
\label{def:homalt}
Let $(A,\mu,\alpha)$ be a Hom-algebra.
\begin{enumerate}
\item
$A$ is called a \textbf{right Hom-alternative algebra} \cite{mak} if it satisfies
\begin{equation}
\label{homalt}
as_A(y,x,x) = 0
\end{equation}
for all $x,y \in A$.
\item
$A$ is called a \textbf{non-commutative Hom-Jordan algebra} if $A$ is Hom-flexible, i.e., $as(x,y,x) = 0$, and if
\begin{equation}
\label{homjordan}
as_A(x^2,\alpha(y),\alpha(x)) = 0
\end{equation}
for all $x,y \in A$.
\item
$A$ is called a \textbf{Hom-Jordan algebra} \cite{yau12} if $A$ is commutative and satisfies \eqref{homjordan} for all $x,y \in A$.
\end{enumerate}
\end{definition}

Construction results for these Hom-algebras along the lines of Theorem \ref{thm:twist} and Corollaries \ref{cor1:twist} and \ref{cor2:twist} can be easily formulated and proved.  Right-alternative and (non-commutative) Jordan algebras are equivalent to right Hom-alternative and (non-commutative) Hom-Jordan algebras with identity twisting maps.  The conditions \eqref{homalt} and \eqref{homjordan} are called the \textbf{right Hom-alternative identity} and the \textbf{Hom-Jordan identity}, respectively.  They are the Hom-type generalizations of the usual right-alternative identity and the Jordan identity.

Every Hom-alternative algebra is a right Hom-alternative algebra.  Every Hom-Jordan algebra is a non-commutative Hom-Jordan algebra, since commutativity implies Hom-flexibility.  Analogous to the classical case, it is proved in \cite{yau12} that multiplicative Hom-alternative algebras are both Hom-Jordan admissible and Hom-Maltsev admissible.  We should point out that the Hom-Jordan identity above is different from the one defined in \cite{mak}.

\begin{proposition}
\label{prop:homjordan}
Let $(A,\mu,\alpha)$ be a multiplicative Hom-algebra that is either a right Hom-alternative algebra or a non-commutative Hom-Jordan algebra.  Then $A$ is up to fourth Hom-power associative.
\end{proposition}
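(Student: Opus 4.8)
The plan is to exploit Corollary \ref{cor2:third}, which packages exactly this kind of statement. Since $A$ is assumed multiplicative, that corollary asserts that $A$ is up to fourth Hom-power associative if and only if
\[
as(x,x,x) = 0 = as(x^2,\alpha(x),\alpha(x))
\]
for all $x \in A$. So I only need to verify these two associator identities, and in both cases they will fall out by specializing the defining identity of the class in question. The whole argument is therefore a matter of substitution once the reduction of Corollary \ref{cor2:third} is in place.

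First I would treat the right Hom-alternative case, where the defining identity is $as(y,x,x) = 0$ for all $x,y \in A$. Setting $y = x$ immediately gives $as(x,x,x) = 0$, the first required identity. For the second, I apply the right Hom-alternative identity to the pair $(x^2,\alpha(x))$ — that is, take $y = x^2$ with the repeated argument equal to $\alpha(x)$ — to obtain $as(x^2,\alpha(x),\alpha(x)) = 0$. Both identities of Corollary \ref{cor2:third} then hold, so $A$ is up to fourth Hom-power associative.

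Next I would treat the non-commutative Hom-Jordan case. By definition such an algebra is Hom-flexible, $as(x,y,x) = 0$, and satisfies the Hom-Jordan identity $as(x^2,\alpha(y),\alpha(x)) = 0$. Hom-flexibility with $y = x$ yields $as(x,x,x) = 0$ (equivalently, one may invoke Corollary \ref{cor:flexible} to get third Hom-power associativity directly), while the Hom-Jordan identity with $y = x$ yields $as(x^2,\alpha(x),\alpha(x)) = 0$. Again both identities of Corollary \ref{cor2:third} hold, giving the conclusion.

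There is no genuine obstacle: the substance of the proposition has already been absorbed into Corollary \ref{cor2:third}, and what remains is simply to observe that the two target associators are literal specializations of the right Hom-alternative identity and of the Hom-Jordan identity, respectively. The one point worth flagging is that the multiplicativity hypothesis is essential and is used only indirectly — it is precisely what lets Corollary \ref{cor2:third} rewrite $as(x^2,\alpha(x),\alpha(x)) = 0$ as the fourth-power identity $x^4 = \alpha(x^2)\alpha(x^2)$, so that the vanishing of that associator really does encode fourth Hom-power associativity.
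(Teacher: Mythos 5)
Your proposal is correct and follows essentially the same route as the paper's own proof: both reduce via Corollary \ref{cor2:third} to the two identities $as(x,x,x) = 0 = as(x^2,\alpha(x),\alpha(x))$, and then obtain them by exactly the substitutions you describe --- $(x,y) \mapsto (\alpha(x),x^2)$ in the right Hom-alternative identity \eqref{homalt}, and $y = x$ in Hom-flexibility and in the Hom-Jordan identity \eqref{homjordan}. Your closing remark about where multiplicativity enters (only through Corollary \ref{cor2:third}, in rewriting $as(x^2,\alpha(x),\alpha(x)) = 0$ as $x^4 = \alpha(x^2)\alpha(x^2)$) is also accurate.
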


\begin{proof}
By Corollary \ref{cor2:third} we only need to show that $A$ satisfies the two identities in \eqref{uptofourth'}.  The identity $as(x,x,x) = 0$ is satisfied in both cases because of either right Hom-alternativity \eqref{homalt} or Hom-flexibility.  The other identity in \eqref{uptofourth'}, $as(x^2,\alpha(x),\alpha(x)) = 0$,  can be obtained from either (i) the right Hom-alternative identity \eqref{homalt} by replacing $(x,y)$ by $(\alpha(x),x^2)$, or from (ii) the Hom-Jordan identity \eqref{homjordan} by setting $y = x$.
\end{proof}

In Corollary \ref{cor3:main} below, we will see that the Hom-algebras in Proposition \ref{prop:homjordan} are, in fact, Hom-power associative.  Using Hom-alternative and Hom-Jordan algebras, let us point out a major conceptual difference between power associative algebras and Hom-power associative algebras.

Recall from the introduction that a power associative algebra can be equivalently defined as an algebra in which every element generates an associative sub-algebra.  In contrast, in a general Hom-power associative algebra, it is not true that every element generates a Hom-associative sub-Hom-algebra.  This latter condition is too restrictive, and it excludes some Hom-alternative and Hom-Jordan algebras, all of which are Hom-power associative by Corollary \ref{cor3:main}.  In fact, in a Hom-algebra $(A,\mu,\alpha)$, the sub-Hom-algebra generated by an element $x$ includes $\alpha^k(x)$ for all $k \geq 1$.  However, the Hom-associator (Definition \ref{def:homassociator})
\begin{equation}
\label{xkl}
as_A(x,\alpha(x),\alpha^2(x)) = (x\alpha(x))\alpha^{3}(x) - \alpha(x)(\alpha(x)\alpha^2(x))
\end{equation}
does not need to be equal to $0$ in a Hom-alternative algebra or a Hom-Jordan algebra.  When this particular Hom-associator is not equal to $0$, the sub-Hom-algebra generated by $x$ is not a Hom-associative algebra.  We illustrate this in the following two examples.

\begin{example}
\label{ex:oct}
In this example, we exhibit a multiplicative Hom-alternative algebra (hence a Hom-power associative algebra by Corollary \ref{cor3:main}) in which the Hom-associator in \eqref{xkl} is not equal to $0$ for some element $x$.

The octonions $\oct$ \cite{baez} is constructed from the field $\bk$ of real numbers using the Cayley-Dickson process three times with the parameter $-1$ in each step.  More concretely, the octonions $\oct$ is an eight-dimensional alternative (but not associative) algebra with a basis $\{e_0,\ldots,e_7\}$ and the following multiplication table, where $\mu$ denotes the multiplication in $\oct$.
\begin{center}
\begin{tabular}{|c|c|c|c|c|c|c|c|c|}\hline
$\mu$ & $e_0$ & $e_1$ & $e_2$ & $e_3$ & $e_4$ & $e_5$ & $e_6$ & $e_7$ \\\hline
$e_0$ & $e_0$ & $e_1$ & $e_2$ & $e_3$ & $e_4$ & $e_5$ & $e_6$ & $e_7$ \\\hline
$e_1$ & $e_1$ & $-e_0$ & $e_4$ & $e_7$ & $-e_2$ & $e_6$ & $-e_5$ & $-e_3$ \\\hline
$e_2$ & $e_2$ & $-e_4$ & $-e_0$ & $e_5$ & $e_1$ & $-e_3$ & $e_7$ & $-e_6$ \\\hline
$e_3$ & $e_3$ & $-e_7$ & $-e_5$ & $-e_0$ & $e_6$ & $e_2$ & $-e_4$ & $e_1$ \\\hline
$e_4$ & $e_4$ & $e_2$ & $-e_1$ & $-e_6$ & $-e_0$ & $e_7$ & $e_3$ & $-e_5$ \\\hline
$e_5$ & $e_5$ & $-e_6$ & $e_3$ & $-e_2$ & $-e_7$ & $-e_0$ & $e_1$ & $e_4$ \\\hline
$e_6$ & $e_6$ & $e_5$ & $-e_7$ & $e_4$ & $-e_3$ & $-e_1$ & $-e_0$ & $e_2$ \\\hline
$e_7$ & $e_7$ & $e_3$ & $e_6$ & $-e_1$ & $e_5$ & $-e_4$ & $-e_2$ & $-e_0$ \\\hline
\end{tabular}
\end{center}
There is an algebra automorphism $\alpha \colon \oct \to \oct$ given by
\begin{equation}
\label{octaut}
\begin{split}
\alpha(e_0) = e_0,\quad \alpha(e_1) = e_5,\quad \alpha(e_2) = e_6,\quad \alpha(e_3) = e_7,\\
\alpha(e_4) = e_1,\quad \alpha(e_5) = e_2,\quad \alpha(e_6) = e_3,\quad \alpha(e_7) = e_4.
\end{split}
\end{equation}
Using \cite{mak} (Theorem 3.1), which is the analogue of Corollary \ref{cor2:twist} for Hom-alternative algebras, we obtain a multiplicative Hom-alternative algebra
\[
\octalpha = (\oct,\mualpha = \alpha\mu,\alpha)
\]
with the following multiplication table.
\begin{center}
\begin{tabular}{|c|c|c|c|c|c|c|c|c|}\hline
$\mualpha$ & $e_0$ & $e_1$ & $e_2$ & $e_3$ & $e_4$ & $e_5$ & $e_6$ & $e_7$ \\\hline
$e_0$ & $e_0$ & $e_5$ & $e_6$ & $e_7$ & $e_1$ & $e_2$ & $e_3$ & $e_4$\\\hline
$e_1$ & $e_5$ & $-e_0$ & $e_1$ & $e_4$ & $-e_6$ & $e_3$ & $-e_2$ & $-e_7$\\\hline
$e_2$ & $e_6$ & $-e_1$ & $-e_0$ & $e_2$ & $e_5$ & $-e_7$ & $e_4$ & $-e_3$\\\hline
$e_3$ & $e_7$ & $-e_4$ & $-e_2$ & $-e_0$ & $e_3$ & $e_6$ & $-e_1$ & $e_5$\\\hline
$e_4$ & $e_1$ & $e_6$ & $-e_5$ & $-e_3$ & $-e_0$ & $e_4$ & $e_7$ & $-e_2$\\\hline
$e_5$ & $e_2$ & $-e_3$ & $e_7$ & $-e_6$ & $-e_4$ & $-e_0$ & $e_5$ & $e_1$\\\hline
$e_6$ & $e_3$ & $e_2$ & $-e_4$ & $e_1$ & $-e_7$ & $-e_5$ & $-e_0$ & $e_6$\\\hline
$e_7$ & $e_4$ & $e_7$ & $e_3$ & $-e_5$ & $e_2$ & $-e_1$ & $-e_6$ & $-e_0$\\\hline
\end{tabular}
\end{center}
By Corollary \ref{cor3:main} below, $\octalpha$ is a Hom-power associative algebra.  On the other hand, one can check that
\[
as_{\octalpha}(e_1,\alpha(e_1),\alpha^2(e_1)) = -2e_1 \not= 0.
\]
Therefore, the sub-Hom-algebra in the Hom-power associative algebra $\octalpha$ generated by $e_1$ is not Hom-associative.
\qed
\end{example}

\begin{example}
\label{ex:jordan}
In this example, we exhibit a multiplicative Hom-Jordan algebra (hence a Hom-power associative algebra by Corollary \ref{cor3:main}) in which the Hom-associator in \eqref{xkl} is not equal to $0$ for some element $x$.

Consider the $27$-dimensional exceptional simple Jordan algebra $M^8_3$ \cite{schafer}.  The elements of $M^8_3$ are $3 \times 3$ Hermitian octonionic matrices, i.e., matrices of the form
\[
X =
\begin{pmatrix}
a_1 & x & y\\
\xbar & a_2 & z\\
\ybar & \zbar & a_3
\end{pmatrix}
\]
with each $a_i \in \bk$ and $x,y,z \in \oct$ (see Example \ref{ex:oct}).  Here we are using the convention $a_i = a_ie_0$ for the diagonal elements.  For an octonion $x = \sum_{i=0}^7 b_ie_i$ with each $b_i \in \bk$, its \emph{conjugate} is defined as the octonion\[
\xbar = b_0e_0 - \sum_{i=1}^7 b_ie_i.
\]
This $\bk$-module $M^8_3$ becomes a Jordan algebra with the multiplication
\[
X \ast Y = \frac{1}{2}(XY + YX),
\]
where $XY$ and $YX$ are the usual matrix multiplication.

Let $\alpha \colon \oct \to \oct$ be the algebra automorphism in \eqref{octaut}, which is both unit-preserving and conjugate-preserving, i.e., $\alpha(e_0) = e_0$ and $\alpha(\xbar) = \overline{\alpha(x)}$ for all $x \in \oct$.  It extends entrywise to a linear automorphism $\alpha \colon M^8_3 \to M^8_3$.  This extended map $\alpha$ respects matrix multiplication and the Jordan product $\ast$, i.e., $\alpha$ is an algebra automorphism on $(M^8_3,\ast)$.  Using the Hom-Jordan analogue of Corollary \ref{cor2:twist} (= Theorem 5.8 in \cite{yau12}), we obtain a multiplicative Hom-Jordan algebra
\[
(M^8_3)_\alpha = (M^8_3,\ast_\alpha = \alpha \circ \ast,\alpha).
\]
By Corollary \ref{cor3:main} below, $(M^8_3)_\alpha$ is a Hom-power associative algebra.  On the other hand, one can check that
\[
as_{(M^8_3)_\alpha}(X,\alpha(X),\alpha^2(X)) \not= 0
\]
for the element
\[
X = \matrixy.
\]
Therefore, the sub-Hom-algebra in the Hom-power associative algebra $(M^8_3)_\alpha$ generated by $X$ is not Hom-associative.
\qed
\end{example}

The rest of this section is devoted to studying the identity $x^4 = \alpha(x^2)\alpha(x^2)$, which is part of fourth Hom-power associativity.
To state the result below, we introduce the following notations to represent the linearized forms of $x^4 = ((xx)\alpha(x))\alpha^2(x)$ and $\alpha(x^2)\alpha(x^2) = \alpha(xx)\alpha(xx)$.

\begin{definition}
\label{def:F}
Let $(A,\mu,\alpha)$ be a Hom-algebra, and let $x,y,z$, and $w$ be elements in $A$. Regard the permutations in $S_4$ as permutations of the letters $(x,y,z,w)$.  Consider the subsets
\[
\begin{split}
I_L &= \{(1~2~3)^i(1~2~3~4)^j \colon 0 \leq i \leq 2,\, 0 \leq j \leq 3\},\\
I_R &= \{Id,\, (1~3),\, (2~3),\, (1~4),\, (2~4),\, (1~3)(2~4)\}
\end{split}
\]
of $S_4$.  Let $\sum_L$ and $\sum_R$ denote the sums indexed by the sets $I_L$ and $I_R$, respectively.  Define the elements
\[
\begin{split}
F_L(x,y,z,w) &= \sum_L \left((x \bullet y)\alpha(z)\right)\alpha^2(w),\\
F_R(x,y,z,w) &= \sum_R \alpha(x \bullet y)\alpha(z \bullet w),\\
F(x,y,z,w) &= F_L(x,y,z,w) - F_R(x,y,z,w)
\end{split}
\]
in $A$, where $x \bullet y = xy + yx$ as in \eqref{bullet}.
\end{definition}

Note that $x \bullet y = y \bullet x$ and that there are twelve terms in $\sum_L$ and six terms in $\sum_R$.  The twelve terms in $\sum_L$ are the twelve permutations of the four variables in the expression $\left((x \bullet y)\alpha(z)\right)\alpha^2(w)$.  Likewise, the six terms in $\sum_R$ are the six permutations of the four variables in the expression $\alpha(x \bullet y)\alpha(z \bullet w)$.

The following result is the Hom-version of \cite{albert1} (Lemma 3).  It gives a linearized form of the identity $x^4 = \alpha(x^2)\alpha(x^2)$.  It is also a crucial ingredient in the proof of Theorem \ref{mainthm}.

\begin{theorem}
\label{thm:fourth}
Let $(A,\mu,\alpha)$ be a Hom-algebra.  Then $A$ satisfies
\[
x^4 = \alpha(x^2)\alpha(x^2)
\]
for all $x \in A$ if and only if
\begin{equation}
\label{F0}
F(x,y,z,w) = 0
\end{equation}
for all $x,y,z,w \in A$.
\end{theorem}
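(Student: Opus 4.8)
The statement is the claim that the single-variable identity $x^4 = \alpha(x^2)\alpha(x^2)$ is equivalent to its full linearization $F(x,y,z,w)=0$. Since we are in characteristic $0$, the standard technique is \emph{polarization}: one direction (linearized $\Rightarrow$ single-variable) is the easy specialization, and the other direction (single-variable $\Rightarrow$ linearized) is obtained by substituting $x \mapsto x + \lambda_1 y + \lambda_2 z + \lambda_3 w$ and extracting the multilinear component. The whole proof is really a bookkeeping exercise in verifying that the combinatorial data encoded in the index sets $I_L$ and $I_R$ matches exactly the multilinear part of the polarized identity.

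\textbf{The easy direction.} First I would prove that \eqref{F0} implies the single-variable identity. Setting $x=y=z=w$ collapses each $x \bullet y$ into $2x^2$ (since $x \bullet x = 2xx = 2x^2$) and each $\alpha(z), \alpha^2(w)$ into the appropriate power of $\alpha$ applied to $x$. I expect $F_L(x,x,x,x)$ to become $12 \cdot \big((2x^2)\alpha(x)\big)\alpha^2(x) = 24\,\big((x^2\alpha(x))\alpha^2(x)\big) = 24\,x^4$, using that all twelve permutations in $I_L$ give the same value on the diagonal, and similarly $F_R(x,x,x,x) = 6 \cdot \alpha(2x^2)\alpha(2x^2) = 24\,\alpha(x^2)\alpha(x^2)$. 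Hence $F(x,x,x,x) = 24\big(x^4 - \alpha(x^2)\alpha(x^2)\big)$, and dividing by $24$ (legitimate in characteristic $0$) gives the single-variable identity. The only thing to check carefully here is the claimed count of twelve versus six terms and that each reduces correctly under the diagonal substitution, which Definition~\ref{def:F} has already set up.

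\textbf{The hard direction.} For the converse, define $G(x) = x^4 - \alpha(x^2)\alpha(x^2)$ and assume $G(x)=0$ for all $x$. Substitute $x \mapsto x + \lambda_1 y + \lambda_2 z + \lambda_3 w$ and regard $G$ as a polynomial in $\lambda_1,\lambda_2,\lambda_3$; its vanishing for all scalars forces every coefficient to vanish. The coefficient of $\lambda_1\lambda_2\lambda_3$ (the fully multilinear part, picking out one of $y$, $z$, $w$ and one implicit copy of $x$) should be, up to a constant factor, precisely $F(x,y,z,w)$. The main obstacle is the careful matching: I must show that the multilinear part of the expansion of $x^4 = ((xx)\alpha(x))\alpha^2(x)$ yields exactly the twelve $\sum_L$ terms (the $x \bullet y$ factors arising because the two innermost slots of $x^4$ are symmetric, producing the $\bullet$), and that the multilinear part of $\alpha(x^2)\alpha(x^2)$ yields exactly the six $\sum_R$ terms. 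The structure of $I_L$ as $\{(1\,2\,3)^i(1\,2\,3\,4)^j\}$ and $I_R$ as the listed six transpositions is engineered to record these permutations, so the task is to confirm that the polarization produces precisely these cosets. Since the coefficients are nonzero multiples (again using characteristic $0$), vanishing of $G$ is equivalent to vanishing of $F$. I would close by noting that polarization is reversible in characteristic $0$, so the two conditions are genuinely equivalent rather than merely one-directional.
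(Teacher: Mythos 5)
Your proof is correct, and it reaches \eqref{F0} by a genuinely different route than the paper. You polarize in one shot, substituting $x \mapsto x + \lambda_1 y + \lambda_2 z + \lambda_3 w$ and extracting the coefficient of $\lambda_1\lambda_2\lambda_3$, whereas the paper linearizes iteratively: it substitutes $x + \lambda y$ into $B(x) = (x^2\alpha(x))\alpha^2(x) - \alpha(x^2)\alpha(x^2)$, uses the $\lambda = \pm 1$ trick to isolate the degree-two coefficient $D(x,y)$, then substitutes $x + \eta z$ into $D$ to get a trilinear form $E(x,y,z)$, and finally substitutes $y + \nu w$ into $E$ to land on $F(x,y,z,w)$. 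The trade-off: the paper's sequential method needs only a handful of explicit scalar substitutions and lets it write out the intermediate forms $D$ and $E$ concretely, so the combinatorics is absorbed step by step; your method requires the single combinatorial verification that the full multilinear component of $((xx)\alpha(x))\alpha^2(x)$ is exactly the twelve $\sum_L$ terms and that of $\alpha(xx)\alpha(xx)$ exactly the six $\sum_R$ terms --- which is indeed the content of the remark following Definition \ref{def:F} (the $24$ monomials of each sum over $S_4$ group in pairs, respectively quadruples, into the $\bullet$-expressions), plus the standard fact that a polynomial vanishing for all scalars over a characteristic-zero (hence infinite) field has vanishing coefficients. Two small touch-ups: the coefficient of $\lambda_1\lambda_2\lambda_3$ is \emph{exactly} $F(x,y,z,w)$, with constant $1$, not merely up to a nonzero constant (each multilinear monomial is picked up once), so your hedge is unnecessary; and the inner two slots of $((xx)\alpha(x))\alpha^2(x)$ are not themselves symmetric ($\mu$ is not assumed commutative) --- what produces the $\bullet$ is that the sum over all of $S_4$ is invariant under transposing those slots. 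Your diagonal computation $F(x,x,x,x) = 24\bigl(x^4 - \alpha(x^2)\alpha(x^2)\bigr)$ agrees with the paper's $F(x,x,x,x) = 24B(x)$, and your closing remark about reversibility is redundant since both directions are already proved.
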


\begin{proof}
For $x \in A$, let
\[
B(x) = (x^2\alpha(x))\alpha^2(x) - \alpha(x^2)\alpha(x^2).
\]
The ``if" part follows from the identity
\[
F(x,x,x,x) = 24B(x)
\]
and the definition $x^4 = (x^2\alpha(x))\alpha^2(x)$.

For the ``only if" part, assume that $B(x) = 0$ for all $x \in A$.  For $\lambda \in \bk$ and $y \in A$, we have
\begin{equation}
\label{A(x)}
\begin{split}
0 &= B(x + \lambda y)\\
&= B(x) + \lambda C(x,y) + \lambda^2 D(x,y) + \lambda^3 C(y,x) + \lambda^4 B(y)\\
&= \lambda C(x,y) + \lambda^2 D(x,y) + \lambda^3 C(y,x).
\end{split}
\end{equation}
Here $C(x,y)$ is some expression involving $x$, $y$, and $\alpha$, and $D(x,y)$ is
\[
\begin{split}
D(x,y) &= (y^2\alpha(x))\alpha^2(x) + (x^2\alpha(y))\alpha^2(y)\\
&\relphantom{} + \left((x \bullet y)\alpha(x)\right)\alpha^2(y)
+ \left((x \bullet y)\alpha(y)\right)\alpha^2(x) \\
&\relphantom{} - \alpha(x \bullet y)\alpha(x \bullet y) - \alpha(x^2) \bullet \alpha(y^2).
\end{split}
\]
Substituting $\lambda = \pm 1$ in \eqref{A(x)} and adding the results, one infers that $D(x,y) = 0$ for all $x,y \in A$.

Next, for $\eta \in \bk$ and $z \in A$, we have
\[
\begin{split}
0 &= D(x + \eta z,y)\\
&= D(x,y) + \eta E(x,y,z) + \eta^2 D(z,y)\\
&= \eta E(x,y,z).
\end{split}
\]
Setting $\eta = 1$, this implies $E(x,y,z) = 0$ for all $x,y,z \in A$.  A simple calculation shows that
\[
\begin{split}
E(x,y,z)
&= \left(y^2\alpha(x)\right)\alpha^2(z) + \left(y^2\alpha(z)\right)\alpha^2(x) + \left((x \bullet z)\alpha(y)\right)\alpha^2(y)\\
&\relphantom{} + \left((x \bullet y)\alpha(z)\right)\alpha^2(y) + \left((z \bullet y)\alpha(x)\right)\alpha^2(y)\\
&\relphantom{} + \left((x \bullet y)\alpha(y)\right)\alpha^2(z) + \left((z \bullet y)\alpha(y)\right)\alpha^2(x)\\
&\relphantom{} - \alpha(x \bullet y) \bullet \alpha(z \bullet y) - \alpha(x \bullet z) \bullet \alpha(y^2).
\end{split}
\]
Finally, for $\nu \in \bk$ and $w \in A$, we have
\[
\begin{split}
0 &= E(x, y+\nu w, z)\\
&= E(x,y,z) + \nu F(x,y,z,w) + \nu^2 E(x,w,z)\\
&= \nu F(x,y,z,w),
\end{split}
\]
where $F(x,y,z,w)$ is as defined in Definition \ref{def:F}.  Setting $\nu = 1$, this implies $F(x,y,z,w) = 0$ for all $x,y,z,w \in A$, as desired.
\end{proof}

\section{Hom-power associativity}
\label{sec:maintheorem}

The purpose of this section is to show that, in the presence of multiplicativity, up to fourth Hom-power associativity implies Hom-power associativity, generalizing a well-known result of Albert \cite{albert1} about power associative algebras.  The desired result is a consequence of the following result, which is the Hom-version of \cite{albert1} (Lemma 4).

\begin{theorem}
\label{mainthm}
Let $(A,\mu,\alpha)$ be a multiplicative up to $(n-1)$st Hom-power associative algebra for some $n \geq 5$.  Then $A$ is up to $n$th Hom-power associative.
\end{theorem}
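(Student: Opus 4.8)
The plan is to observe that, since $A$ is already assumed up to $(n-1)$st Hom-power associative, proving the theorem reduces to verifying $n$th Hom-power associativity \eqref{nhpa}. Writing
\[
\delta_i = \alpha^{n-i-1}(x^i)\,\alpha^{i-1}(x^{n-i}) - x^n,
\]
I must show $\delta_i = 0$ for all $x \in A$ and all $i \in \{1,\dots,n-1\}$. Two of these vanish for free: the case $i=n-1$ is the defining recursion \eqref{hompower} of $x^n$, so $\delta_{n-1}=0$; and since $n \ge 5$, Proposition~\ref{prop:n-1} applies, so \eqref{commute} shows that the two factors $\alpha^{n-i-1}(x^i)$ and $\alpha^{i-1}(x^{n-i})$ commute. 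Comparing the index $i$ with the index $n-i$ in \eqref{nhpa}, this commutativity yields the symmetry $\delta_i = \delta_{n-i}$, and in particular $\delta_1 = \delta_{n-1} = 0$. It therefore remains to treat $2 \le i \le n-2$.

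The engine for these cases is the linearized fourth-power identity $F(x,y,z,w)=0$ of Theorem~\ref{thm:fourth} (see Definition~\ref{def:F}). For each such $i$ I would substitute into the four slots of $F$ the Hom-powers of a single element $x$ whose degrees sum to $n$ --- for instance $1,\,1,\,i-1,\,n-i-1$, which are positive exactly when $2 \le i \le n-2$ --- each carried by the appropriate power of $\alpha$. Multiplicativity $\alpha\mu=\mu\alpha^{\otimes 2}$ then lets me pull every outer $\alpha$ through the products, and the up to $(n-1)$st Hom-power associativity of $A$, together with Proposition~\ref{prop:n-1}, collapses every sub-product of total degree at most $n-1$ to the corresponding Hom-power carrying its canonical twist. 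Each surviving degree-$n$ product is thereby recognized as one of the elements $\alpha^{n-j-1}(x^j)\alpha^{j-1}(x^{n-j}) = \delta_j + x^n$, and after using the symmetry $\delta_j=\delta_{n-j}$ to merge symmetric contributions the identity becomes a linear relation among the $\delta_j$ at indices near $i$. The aim is to arrange the substitutions so that these relations amount to the discrete Laplace equation
\[
\delta_{i-1} - 2\delta_i + \delta_{i+1} = 0, \qquad 2 \le i \le n-2.
\]

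This recurrence says exactly that $i \mapsto \delta_i$ is an affine function of $i$ on $\{1,\dots,n-1\}$. As it vanishes at the two distinct points $i=1$ and $i=n-1$ (distinct because $n \ge 5$), it vanishes identically: writing $c$ for the constant $\delta_{i+1}-\delta_i$, one gets $\delta_i = \delta_1 + (i-1)c = (i-1)c$, and then $\delta_{n-1}=0$ forces $(n-2)c=0$, so $c=0$ and $\delta_i=0$ for every $i$. This establishes \eqref{nhpa} and completes the induction step. The structure here --- a linear recurrence solved by its boundary values --- deliberately mirrors the proof of Proposition~\ref{prop:n-1}, where the increment relation $x(\lambda+1)=x(\lambda)+x(1)$ was pinned down by the anti-symmetry of the commutator.

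The hard part will be the second step --- the substitution-and-collapse bookkeeping --- and the difficulty is entirely in the $\alpha$-exponents. Hom-powers carry the prescribed twists of \eqref{hompower}, and multiplicativity shifts those exponents each time an $\alpha$ crosses a product, so one must verify that after substitution every sub-product of degree $m \le n-1$ carries exactly the twist identifying it, through the induction hypothesis, with $x^m$, and that the two factors of each surviving degree-$n$ split carry exactly the twists $\alpha^{n-j-1}$ and $\alpha^{j-1}$ of \eqref{nhpa}. Tracking degrees and twists at once, and checking that the twelve terms of $F_L$ and the six terms of $F_R$ assemble with the correct multiplicities so that the spurious lower-index contributions cancel and the coefficient of $\delta_i$ is precisely $-2$, is where the real work lies; the commutativity \eqref{commute} is what lets the symmetric contributions be merged and confines the surviving relation to the indices neighbouring $i$.
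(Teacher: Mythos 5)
Your overall architecture does match the paper's: reduce to \eqref{nhpa}, dispose of the boundary cases via the definition of Hom-powers and the symmetry $\delta_i = \delta_{n-i}$ supplied by Proposition \ref{prop:n-1}, and then extract linear relations among the $\delta_i$ by substituting Hom-powers of total degree $n$ into the linearized identity $F=0$ of Theorem \ref{thm:fourth}. The gap is in the relation you hope to extract, and it is fatal to your endgame. The substitution you propose, with degrees $(1,1,i-1,n-i-1)$, is (after relabeling $k = i-1$) exactly the one the paper uses in Lemma \ref{lem5:main}, namely $F\left(\alpha^{n-k-3}(x^k), \alpha^{n-4}(x), \alpha^{n-4}(x), \alpha^{k-1}(x^{n-k-2})\right) = 0$; when the bookkeeping is actually carried out it yields, in your notation,
\[
3\delta_{k+2} = 8\delta_{k+1} - 3\delta_k + 4\delta_2,
\]
not the discrete Laplace equation $\delta_{i-1} - 2\delta_i + \delta_{i+1} = 0$. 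The coefficients $(3,-8,3)$ and the inhomogeneous term $4\delta_2$ are forced by the fixed combinatorics of the twelve terms of $F_L$ and the six terms of $F_R$ (compare Lemmas \ref{lem1:main} and \ref{lem3:main}, which give $\delta_3 = 4\delta_2$ and $\delta_4 = 11\delta_2$ --- already incompatible with affine dependence on $i$); you cannot ``arrange'' the substitution to produce $(1,-2,1)$. Since the homogeneous solutions of the true recurrence are $r_\pm^i$ with $r_\pm = (4\pm\sqrt{7})/3$, the solutions are not affine in $i$, so your argument ``vanishes at $i=1$ and $i=n-1$, hence vanishes identically'' does not apply. Moreover, the substitution is only legal for $1 \le k \le n/2 - 2$, so the recurrence is not even available on the full range $2 \le i \le n-2$ that a two-point boundary-value argument would require.

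What actually closes the argument, and what your proposal is missing, is an independent proof that $\delta_2 = 0$, so that the recurrence can be run \emph{forward} from the two consecutive zeros $\delta_1 = \delta_2 = 0$ (for a three-term recurrence, two consecutive initial zeros force vanishing, whereas two endpoint zeros do not unless the solution space is affine). The paper obtains $\delta_2 = 0$ for $n \ge 6$ in Lemma \ref{lem4:main} by playing two further special cases of \eqref{F0} (the identities \eqref{iii} and \eqref{iv}, whose $x^{n-5,5}$ terms cancel upon subtraction) against Lemmas \ref{lem1:main} and \ref{lem3:main}; and the case $n=5$ needs a separate argument (Lemma \ref{lem2:main}), since there $\delta_2 = \delta_3$ by Proposition \ref{prop:n-1} while $\delta_3 = 4\delta_2$ by Lemma \ref{lem1:main}, forcing $\delta_2 = 0$. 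Only then does the induction of Lemma \ref{lem5:main} propagate $\delta_i = 0$ up to $i \le n/2$, with Proposition \ref{prop:n-1} handling the reflected indices. So your proposal has the right skeleton but the wrong engine: the claimed Laplace relation is unobtainable, the boundary-value solve it was meant to feed is therefore unsound, and the genuinely hard content --- the three extra substitutions establishing $\delta_2 = 0$ and the $n=5$ case split --- is absent.
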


Before we give the proof of Theorem \ref{mainthm}, let us record some of its consequences.

\begin{corollary}
\label{cor1:main}
Let $(A,\mu,\alpha)$ be a multiplicative Hom-algebra.  Then the following statements are equivalent.
\begin{enumerate}
\item
$A$ satisfies
\[
x^2\alpha(x) = \alpha(x)x^2 \quad\text{and}\quad
x^4 = \alpha(x^2)\alpha(x^2)
\]
for all $x \in A$.
\item
$A$ satisfies
\[
as(x,x,x) =  0 = as(x^2,\alpha(x),\alpha(x))
\]
for all $x \in A$.
\item
$A$ is up to fourth Hom-power associative.
\item
$A$ is Hom-power associative.
\end{enumerate}
\end{corollary}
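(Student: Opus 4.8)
The plan is to leverage two results already in hand. Corollary \ref{cor2:third} establishes that, for a multiplicative Hom-algebra, statements (1), (2), and (3) are mutually equivalent, and Theorem \ref{mainthm} is the inductive engine that propagates Hom-power associativity upward. Granting these, the entire content of the Corollary collapses to proving the equivalence of (3) (up to fourth Hom-power associativity) with (4) (Hom-power associativity); the chain (1) $\Leftrightarrow$ (2) $\Leftrightarrow$ (3) is imported verbatim from Corollary \ref{cor2:third}.

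First I would dispatch the easy implication (4) $\Rightarrow$ (3). By definition a Hom-power associative algebra is $n$th Hom-power associative for every $n \geq 2$, so in particular it is $k$th Hom-power associative for each $k \in \{2,3,4\}$; that is exactly up to fourth Hom-power associativity.

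The substantive direction is (3) $\Rightarrow$ (4), which I would prove by induction on $n$, showing that a multiplicative up to fourth Hom-power associative algebra is up to $n$th Hom-power associative for all $n \geq 4$. The base case $n = 4$ is hypothesis (3) itself. For the inductive step, suppose $A$ is up to $(n-1)$st Hom-power associative for some $n \geq 5$; then Theorem \ref{mainthm} applies verbatim and delivers that $A$ is up to $n$th Hom-power associative. Since every Hom-algebra is automatically second Hom-power associative, this shows $A$ is $n$th Hom-power associative for all $n \geq 2$, which is precisely Hom-power associativity.

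The real difficulty of the whole circle of ideas has been front-loaded into Theorem \ref{mainthm} (resting in turn on Proposition \ref{prop:n-1} and Theorem \ref{thm:fourth}), so at the level of the Corollary there is no genuine obstacle: the argument is a clean bootstrapping induction. The only point demanding a moment's attention is verifying that the hypotheses of Theorem \ref{mainthm}---multiplicativity and the constraint $n \geq 5$---hold at each application, and both are automatic, since multiplicativity is a standing assumption and the induction starts exactly at $n = 5$.
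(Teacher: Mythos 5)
Your proposal is correct and matches the paper's own proof essentially verbatim: the equivalence of (1)--(3) is imported from Corollary \ref{cor2:third}, and (3) $\Rightarrow$ (4) follows by repeated application of Theorem \ref{mainthm}, with (4) $\Rightarrow$ (3) immediate from the definitions. Your added care in checking the hypotheses ($n \geq 5$, multiplicativity) at each inductive step is a fair elaboration of what the paper leaves implicit in the phrase ``apply Theorem \ref{mainthm} repeatedly.''
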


\begin{proof}
The equivalence of the first three statements is Corollary \ref{cor2:third}.  On the other hand, if $A$ is multiplicative up to fourth Hom-power associative, then we apply Theorem \ref{mainthm} repeatedly to conclude that $A$ is up to $n$th Hom-power associative for all $n \geq 5$, i.e., that $A$ is Hom-power associative.
\end{proof}

Setting $\alpha = Id$ in Corollary \ref{cor1:main}, we recover the following result of Albert \cite{albert1} (Theorem 2).

\begin{corollary}
\label{cor2:main}
An algebra $A$ is power associative if and only if it satisfies
\[
x^2x = xx^2 \quad\text{and}\quad (x^2x)x = x^2x^2
\]
for all $x \in A$.
\end{corollary}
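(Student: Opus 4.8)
The plan is to obtain Corollary \ref{cor2:main} as the $\alpha = Id$ specialization of Corollary \ref{cor1:main}, which is precisely the route signalled by the sentence preceding the statement. First I would observe that an ordinary algebra $(A,\mu)$ is nothing but the Hom-algebra $(A,\mu,Id)$ with identity twisting map, and that this Hom-algebra is automatically multiplicative, since $\alpha\mu = \mu = \mu\alpha^{\otimes 2}$ when $\alpha = Id$. Hence the hypotheses of Corollary \ref{cor1:main} are met with no extra assumption.

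Next I would record how each ingredient degenerates at $\alpha = Id$. The $n$th Hom-power $x^n = x^{n-1}\alpha^{n-2}(x)$ collapses to the ordinary right power $x^n = x^{n-1}x$, so that $n$th Hom-power associativity becomes ordinary $n$th power associativity and, as noted after Definition \ref{def:hompower}, Hom-power associativity becomes power associativity; in particular statement (4) of Corollary \ref{cor1:main} reads ``$A$ is power associative.'' For the identities in statement (1), the relation $x^2\alpha(x) = \alpha(x)x^2$ becomes $x^2x = xx^2$, while $x^4 = \alpha(x^2)\alpha(x^2)$ becomes $x^4 = x^2x^2$; using the recursive definition at $\alpha = Id$ (compare the displayed list of the first few Hom-powers) one has $x^4 = (x^2x)x$, so the second identity is exactly $(x^2x)x = x^2x^2$. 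Thus statement (1) of Corollary \ref{cor1:main} becomes precisely the pair of identities in the corollary to be proved.

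With these translations in hand the conclusion is immediate: Corollary \ref{cor1:main} asserts the equivalence of its statements (1) and (4), which now says that $A$ satisfies $x^2x = xx^2$ and $(x^2x)x = x^2x^2$ for all $x$ if and only if $A$ is power associative. This is exactly the assertion of Corollary \ref{cor2:main}.

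I expect essentially no obstacle at this level, since the entire difficulty — the passage from the two low-degree identities to the infinite family defining power associativity — has already been absorbed into Corollary \ref{cor1:main}, and ultimately into Theorem \ref{mainthm}, whose multiplicativity hypothesis comes for free here because $\alpha = Id$. The only point requiring care is the purely bookkeeping verification that the Hom-identities specialize to the classical ones, in particular the identification $x^4 = (x^2x)x$. Should one wish to avoid invoking Corollary \ref{cor1:main}, the same result could be reproved directly in Albert's manner by specializing Theorem \ref{thm:fourth} (the linearized fourth-power identity) and Theorem \ref{mainthm} (the inductive step) to $\alpha = Id$; but given that those results are available, the specialization argument above is the most economical.
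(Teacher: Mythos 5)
Your proposal is correct and follows exactly the paper's route: the paper obtains Corollary \ref{cor2:main} by setting $\alpha = Id$ in Corollary \ref{cor1:main}, and your careful bookkeeping (automatic multiplicativity when $\alpha = Id$, Hom-powers collapsing to right powers, and the identification $x^4 = (x^2x)x$) merely makes explicit what the paper leaves implicit. No gaps.
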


In the presence of Hom-flexibility (e.g., when the multiplication is commutative), Hom-power associativity requires only one identity.

\begin{corollary}
\label{cor1.5:main}
Let $(A,\mu,\alpha)$ be a multiplicative Hom-flexible algebra.  Then $A$ is Hom-power associative if and only if it satisfies
\[
x^4 = \alpha(x^2)\alpha(x^2)
\]
for all $x \in A$.
\end{corollary}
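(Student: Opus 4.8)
The plan is to deduce Corollary \ref{cor1.5:main} directly from Corollary \ref{cor1:main} by checking that, under the added hypothesis of Hom-flexibility, the two identities in statement (1) of Corollary \ref{cor1:main} collapse to the single identity $x^4 = \alpha(x^2)\alpha(x^2)$. The key observation is that Hom-flexibility already supplies the first of the two identities for free. Indeed, by definition a Hom-flexible algebra satisfies $as_A(x,y,x) = 0$ for all $x,y$, and specializing $y = x$ gives $as_A(x,x,x) = 0$; by \eqref{homass} this is exactly $(xx)\alpha(x) - \alpha(x)(xx) = 0$, i.e. $x^2\alpha(x) = \alpha(x)x^2$. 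Thus the condition $x^2\alpha(x) = \alpha(x)x^2$ holds automatically and imposes no restriction beyond Hom-flexibility.

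First I would establish the ``only if'' direction: if $A$ is Hom-power associative, then in particular it is fourth Hom-power associative, so the $i=2$ case of \eqref{nhpa} with $n=4$ reads $x^4 = \alpha(x^2)\alpha(x^2)$, which is the asserted identity. For the ``if'' direction, assume $A$ is a multiplicative Hom-flexible algebra satisfying $x^4 = \alpha(x^2)\alpha(x^2)$. As noted above, Hom-flexibility gives $x^2\alpha(x) = \alpha(x)x^2$, so both identities in statement (1) of Corollary \ref{cor1:main} hold. Since $A$ is multiplicative, Corollary \ref{cor1:main} then applies verbatim and yields that $A$ is Hom-power associative. This completes the equivalence.

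The argument is essentially a bookkeeping reduction, so there is no serious obstacle; the only point requiring care is the verification that $as_A(x,x,x) = 0$ is genuinely equivalent to the commutativity-type identity $x^2\alpha(x) = \alpha(x)x^2$, which is immediate from the elementwise formula \eqref{homass} for the Hom-associator together with the definition $x^2 = xx$. One should also note explicitly that Hom-flexibility is exactly the hypothesis needed to discharge the first identity, so that the content of Corollary \ref{cor1:main} is reduced to a single equation; this is precisely the analogue of the classical fact that for commutative (or flexible) algebras power associativity follows from $x^4 = x^2x^2$ alone.
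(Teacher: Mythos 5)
Your proof is correct and follows essentially the same route as the paper: the paper's proof of Corollary \ref{cor1.5:main} likewise cites Corollary \ref{cor:flexible} (Hom-flexibility implies $as_A(x,x,x)=0$, i.e.\ $x^2\alpha(x)=\alpha(x)x^2$) together with Corollary \ref{cor1:main}, which you have merely unpacked in slightly more detail. No gaps.
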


\begin{proof}
This is an immediate consequence of Corollaries \ref{cor:flexible} and \ref{cor1:main}.
\end{proof}

Setting $\alpha = Id$ in Corollary \ref{cor1.5:main}, we recover the following result of Albert \cite{albert1} (Theorem 1).

\begin{corollary}
\label{cor2.5:main}
Suppose $A$ is a flexible (e.g., commutative) algebra.  Then $A$ is power associative if and only if it satisfies
\[
(x^2x)x = x^2x^2
\]
for all $x \in A$.
\end{corollary}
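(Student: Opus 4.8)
The plan is to derive Corollary~\ref{cor2.5:main} as the $\alpha = Id$ specialization of Corollary~\ref{cor1.5:main}, verifying along the way that each Hom-theoretic hypothesis and conclusion collapses to its classical counterpart. First I would regard an ordinary algebra $(A,\mu)$ as the Hom-algebra $(A,\mu,Id)$ with identity twisting map, exactly as the paper does throughout. Such a Hom-algebra is automatically multiplicative, since the condition $\alpha\mu = \mu\alpha^{\otimes 2}$ reads $\mu = \mu$ when $\alpha = Id$; hence the multiplicativity hypothesis of Corollary~\ref{cor1.5:main} is satisfied for free.

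Next I would check that the remaining notions degenerate as expected. When $\alpha = Id$, the Hom-associator \eqref{homass} becomes the ordinary associator $as(x,y,x) = (xy)x - x(yx)$, so Hom-flexibility is precisely flexibility, and the hypothesis that $A$ be a flexible (in particular commutative, by the remark following Definition~\ref{def:homassociator}) algebra matches the Hom-flexibility hypothesis. As recorded in the remarks after Definition~\ref{def:hompower}, at $\alpha = Id$ the $n$th Hom-power $x^n = x^{n-1}\alpha^{n-2}(x)$ reduces to Albert's right power $x^n = x^{n-1}x$, and $n$th Hom-power associativity reduces to ordinary $n$th power associativity; thus Hom-power associativity is exactly power associativity.

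It then remains to translate the single defining identity. Setting $\alpha = Id$ in $x^4 = \alpha(x^2)\alpha(x^2)$ gives $x^4 = x^2x^2$, and since the right power satisfies $x^4 = x^3x = (x^2x)x$, this is the identity $(x^2x)x = x^2x^2$ appearing in the statement. Feeding all of these identifications into Corollary~\ref{cor1.5:main} yields the claim.

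I expect no genuine obstacle here: the entire content is the bookkeeping confirming that each Hom-structure specializes correctly, with no calculation beyond rewriting $x^4$ through its definition. The only points deserving a moment's care are making sure the flexibility hypothesis is correctly identified with Hom-flexibility at $\alpha = Id$, and noting that the commutative case is subsumed because commutativity implies flexibility.
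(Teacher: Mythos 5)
Your proposal is correct and matches the paper's proof exactly: the paper obtains Corollary~\ref{cor2.5:main} precisely by setting $\alpha = Id$ in Corollary~\ref{cor1.5:main}, and your verifications (multiplicativity holding trivially, Hom-flexibility reducing to flexibility, Hom-powers reducing to Albert's right powers, and $x^4 = \alpha(x^2)\alpha(x^2)$ becoming $(x^2x)x = x^2x^2$) are the right bookkeeping, which the paper leaves implicit.
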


Recall the two types of Hom-algebras defined in Definition \ref{def:homalt}.

\begin{corollary}
\label{cor3:main}
Multiplicative right Hom-alternative and multiplicative non-commutative Hom-Jordan algebras are Hom-power associative.
\end{corollary}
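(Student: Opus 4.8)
The plan is to observe that this corollary is a direct assembly of two results already established in the excerpt, so the entire proof amounts to chaining them together; no new computation is required. The two input results are Proposition \ref{prop:homjordan} and Corollary \ref{cor1:main}, and the role of multiplicativity is that it is precisely the hypothesis shared by both of them.

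First I would invoke Proposition \ref{prop:homjordan}. Let $(A,\mu,\alpha)$ be a multiplicative Hom-algebra that is either right Hom-alternative or non-commutative Hom-Jordan. That proposition tells us immediately that $A$ is up to fourth Hom-power associative, which is statement (3) of Corollary \ref{cor1:main}. Internally, Proposition \ref{prop:homjordan} obtains this by verifying the two identities $as(x,x,x) = 0$ and $as(x^2,\alpha(x),\alpha(x)) = 0$ of \eqref{uptofourth'}, the first coming from right Hom-alternativity \eqref{homalt} or Hom-flexibility, and the second from specializing either \eqref{homalt} (with $(x,y)$ replaced by $(\alpha(x),x^2)$) or the Hom-Jordan identity \eqref{homjordan} (with $y=x$).

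Next I would apply Corollary \ref{cor1:main}, whose statements (3) and (4) are equivalent for any multiplicative Hom-algebra. Since $A$ is multiplicative and satisfies statement (3), it also satisfies statement (4), that is, $A$ is Hom-power associative. This completes the argument. The nontrivial content is entirely upstream: the implication from ``up to fourth'' to ``full'' Hom-power associativity rests on Theorem \ref{mainthm}, which is applied repeatedly inside the proof of Corollary \ref{cor1:main}, and which in turn depends on Proposition \ref{prop:n-1} and the linearization in Theorem \ref{thm:fourth}.

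Consequently there is no genuine obstacle at the level of this corollary; the only thing to be careful about is the multiplicativity hypothesis. It is indispensable because both Proposition \ref{prop:homjordan} and Theorem \ref{mainthm} (hence Corollary \ref{cor1:main}) assume it, and without it the passage from finitely many identities to Hom-power associativity is unavailable. I would therefore write the proof as a single short paragraph citing Proposition \ref{prop:homjordan} to secure up to fourth Hom-power associativity and then Corollary \ref{cor1:main} to upgrade this to Hom-power associativity.
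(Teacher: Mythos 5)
Your proposal is correct and matches the paper's proof exactly: the paper also cites Proposition \ref{prop:homjordan} to obtain up to fourth Hom-power associativity and then Corollary \ref{cor1:main} to conclude Hom-power associativity. Your additional remarks on where multiplicativity enters are accurate but not needed for the corollary itself.
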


\begin{proof}
This is an immediate consequence of Proposition \ref{prop:homjordan} and Corollary \ref{cor1:main}.
\end{proof}

Setting the twisting map to be the identity map in Corollary \ref{cor3:main}, we obtain the following well-known result for ordinary algebras \cite{schafer}.

\begin{corollary}
\label{cor4:main}
Right-alternative algebras and non-commutative Jordan algebras are power associative.
\end{corollary}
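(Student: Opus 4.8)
The plan is to derive the statement by specializing Corollary \ref{cor3:main} to the identity twisting map. The key preliminary observation is that every ordinary algebra $(A,\mu)$, regarded as the Hom-algebra $(A,\mu,Id)$, is automatically multiplicative: the condition $\alpha\mu = \mu\alpha^{\otimes 2}$ reads $\mu = \mu$ when $\alpha = Id$. So multiplicativity, which is a hypothesis in Corollary \ref{cor3:main}, is free in the classical setting.

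First I would set up the dictionary between the classical and Hom-theoretic notions under $\alpha = Id$. With the identity twisting map, the Hom-associator $as_A(x,y,z) = (xy)\alpha(z) - \alpha(x)(yz)$ collapses to the ordinary associator $(xy)z - x(yz)$. Consequently, the right Hom-alternative identity $as_A(y,x,x) = 0$ becomes the classical right-alternative identity, and the two conditions defining a non-commutative Hom-Jordan algebra, namely Hom-flexibility $as_A(x,y,x) = 0$ together with the Hom-Jordan identity $as_A(x^2,\alpha(y),\alpha(x)) = 0$, reduce to ordinary flexibility and the classical Jordan identity $as(x^2,y,x) = 0$. Thus a right-alternative algebra is precisely a multiplicative right Hom-alternative algebra with $\alpha = Id$, and a non-commutative Jordan algebra is precisely a multiplicative non-commutative Hom-Jordan algebra with $\alpha = Id$.

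Next I would invoke Corollary \ref{cor3:main} to conclude that each such algebra, viewed as a Hom-algebra with identity twisting map, is Hom-power associative. To translate this conclusion back into the classical language, I would appeal to the remark following Definition \ref{def:hompower}: when $\alpha = Id$, the $n$th Hom-power $x^n = x^{n-1}\alpha^{n-2}(x)$ reduces to Albert's right power $x^n = x^{n-1}x$, and the $n$th Hom-power associativity identity $x^n = \alpha^{n-i-1}(x^i)\alpha^{i-1}(x^{n-i})$ reduces to ordinary $n$th power associativity $x^n = x^i x^{n-i}$. Hence Hom-power associativity with $\alpha = Id$ is exactly power associativity, which yields the claim for both classes of algebras.

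Since the argument is a direct specialization, there is no serious obstacle. The only point requiring genuine care is the verification that the defining identities of right Hom-alternative and non-commutative Hom-Jordan algebras reduce exactly to their classical counterparts under $\alpha = Id$; this, however, is immediate once one records that the Hom-associator degenerates to the ordinary associator, so the bulk of the work is simply tracking how each defining condition transforms.
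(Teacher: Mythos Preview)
Your proposal is correct and matches the paper's own approach exactly: the paper simply states that the result follows by setting the twisting map equal to the identity in Corollary~\ref{cor3:main}. Your write-up is in fact more explicit than the paper's, carefully spelling out why multiplicativity is automatic and why each Hom-notion degenerates to its classical counterpart when $\alpha = Id$, but the underlying argument is identical.
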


The rest of this section is devoted to the proof of Theorem \ref{mainthm}, which involves a series of Lemmas.  To simplify the presentation, we use the notation
\begin{equation}
\label{xij}
x^{i,j} = \alpha^{j-1}(x^i) \alpha^{i-1}(x^j)
\end{equation}
for an element $x$ in a Hom-algebra $(A,\mu,\alpha)$ and positive integers $i$ and $j$, where $x^i$ and $x^j$ are the $i$th and the $j$th Hom-powers \eqref{hompower}.  With this notation, $n$th Hom-power associativity \eqref{nhpa} can be stated as
\begin{equation}
\label{nhpa'}
x^n = x^{n-i,i}
\end{equation}
for all elements $x$ and $1\leq i \leq n-1$.  The equality \eqref{nhpa'} is exactly what we need to prove for Theorem \ref{mainthm}.

Several equalities are used repeatedly below.  In particular, in Theorem \ref{mainthm} the assumption of up to $(n-1)$st Hom-power associativity of $A$ means
\[
x^m = x^{m-i,i}
\]
for all $x \in A$, $2 \leq m < n$, and $1 \leq i \leq m-1$.  Also, by the definition of the Hom-powers we have
\[
x^k = x^{k-1}\alpha^{k-2}(x) = x^{k-1,1}
\]
for all $k \geq 2$.  Moreover, the conclusion of Proposition \ref{prop:n-1} can be stated as
\[
x^{n-i,i} = x^{i,n-i}
\]
for all $x \in A$ and $1 \leq i \leq n-1$.

In the following Lemmas, we derive identities involving $x^n$ and $x^{n-i,i}$ for $2 \leq i \leq 5$.  Then we play these identities against each other to obtain the $n$th Hom-power associativity of $A$.  Our first objective is to prove the special case of Theorem \ref{mainthm} when $n=5$, which requires the following result.

\begin{lemma}
\label{lem1:main}
Let $(A,\mu,\alpha)$ be a multiplicative up to $(n-1)$st Hom-power associative algebra for some $n \geq 5$.  Then
\begin{equation}
\label{i}
x^{n-3,3} = 4x^{n-2,2} - 3x^n
\end{equation}
for all $x \in A$.
\end{lemma}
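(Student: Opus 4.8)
The plan is to derive \eqref{i} by substituting a single judicious choice of arguments into the multilinear identity $F=0$ furnished by Theorem \ref{thm:fourth}. Since $n\geq 5$, the hypothesis that $A$ is up to $(n-1)$st Hom-power associative includes up to fourth Hom-power associativity; hence by Corollary \ref{cor2:third} the algebra $A$ satisfies $x^4=\alpha(x^2)\alpha(x^2)$, so Theorem \ref{thm:fourth} guarantees $F(u,v,s,t)=0$ for all $u,v,s,t\in A$. I would apply this with
\[
(u,v,s,t)=\bigl(x^{n-3},\ \alpha^{n-4}(x),\ \alpha^{n-4}(x),\ \alpha^{n-4}(x)\bigr),
\]
chosen so that every resulting term has degree $n$ in $x$, and then evaluate $F_L$ and $F_R$ separately using the notation \eqref{xij}.

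For $F_R$, each of the six terms $\alpha(u\bullet v)\alpha(s\bullet t)$ indexed by $I_R$ pairs a bullet-product of $x^{n-3}$ with $\alpha^{n-4}(x)$ against a bullet-product of $\alpha^{n-4}(x)$ with itself. Multiplicativity and the up to $(n-1)$st Hom-power associativity relations $x^{n-3}\bullet\alpha^{n-4}(x)=2x^{n-2}$ and $\alpha^{n-4}(x)\bullet\alpha^{n-4}(x)=2\alpha^{n-4}(x^2)$ reduce each term to $4\alpha(x^{n-2})\alpha^{n-3}(x^2)=4x^{n-2,2}$ or to its reverse $4x^{2,n-2}$; by the commutativity conclusion $x^{n-2,2}=x^{2,n-2}$ of Proposition \ref{prop:n-1} these agree, and summing yields $F_R=24\,x^{n-2,2}$.

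For $F_L$, because three of the four arguments coincide, the twelve arrangements of $\bigl((u\bullet v)\alpha(s)\bigr)\alpha^2(t)$ collapse into three expression-types according to the slot occupied by $x^{n-3}$: it lies in the (unordered) bullet-pair in $6$ of the terms, in the $\alpha(\cdot)$ slot in $3$ of them, and in the outer $\alpha^2(\cdot)$ slot in the remaining $3$. Each type is evaluated by repeatedly collapsing an inner product via multiplicativity, the definition \eqref{hompower} of the Hom-powers, and Hom-power associativity of order $\leq n-1$: the first two types both telescope all the way to $2x^n$ (e.g.\ $x^{n-2}\alpha^{n-3}(x)=x^{n-1}$ then $x^{n-1}\alpha^{n-2}(x)=x^n$, and likewise $\alpha^{n-4}(x^2)\alpha(x^{n-3})=x^{n-1}$ via $x^{n-1}=x^{2,n-3}$), while the third reduces only to $2\alpha^{n-4}(x^3)\alpha^2(x^{n-3})=2x^{3,n-3}$, which by Proposition \ref{prop:n-1} equals $2x^{n-3,3}$. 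Thus $F_L=6(2x^n)+3(2x^n)+3(2x^{n-3,3})=18\,x^n+6\,x^{n-3,3}$, and the equation $F_L=F_R$ rearranges at once to \eqref{i}.

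The main obstacle is the bookkeeping of the twisting exponents: every collapse of a product to a Hom-power rests on recognizing it as a specific instance $x^m=x^{i,m-i}$ of \eqref{nhpa'}, and one must check in each case that $m\leq n-1$ so that the hypothesis genuinely applies. The single place where it does not—the product $\alpha^{n-4}(x^3)\alpha^2(x^{n-3})$ in the third $F_L$-type, which ``wants'' to be $x^n$—is precisely where Proposition \ref{prop:n-1} must intervene to leave it as $x^{n-3,3}$. Isolating this one term from those that legitimately telescope to $x^n$, and correctly counting the multiplicities $6,3,3$ among the twelve terms of $\sum_L$ when the arguments are not distinct, is the delicate part of the argument.
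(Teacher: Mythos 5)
Your proposal is correct and follows essentially the same route as the paper: the paper's proof also substitutes $\left(\alpha^{n-4}(x),\alpha^{n-4}(x),\alpha^{n-4}(x),x^{n-3}\right)$ into $F=0$ (the ordering of arguments is immaterial since $F_L$ and $F_R$ sum over all arrangements), obtains $F_L = 6x^{n-3,3}+18x^n$ and $F_R = 24x^{n-2,2}$ with the same $6,3,3$ multiplicity count, and invokes Proposition \ref{prop:n-1} at exactly the same points. Your explicit appeal to Corollary \ref{cor2:third} and Theorem \ref{thm:fourth} to justify $F=0$ is a detail the paper leaves implicit, and your exponent bookkeeping checks out.
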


\begin{proof}
Pick $x \in A$.  The desired identity \eqref{i} is equivalent to the following special case of \eqref{F0}:
\begin{equation}
\label{Fi}
F\left(\alpha^{n-4}(x),\alpha^{n-4}(x),\alpha^{n-4}(x),x^{n-3}\right) = 0.
\end{equation}
Indeed, using the assumptions on $A$, the definition \eqref{hompower} of the Hom-powers, the notations above and those in Definition \ref{def:F}, and the fact that three of the four arguments in $F$ in \eqref{Fi} are the same, we have:
\[
\begin{split}
& F_L\left(\alpha^{n-4}(x), \alpha^{n-4}(x), \alpha^{n-4}(x), x^{n-3}\right)\\
&= 3\left((2\alpha^{n-4}(x)\alpha^{n-4}(x))\alpha^{n-3}(x)\right) \alpha^2(x^{n-3}) + 3\left((2\alpha^{n-4}(x)\alpha^{n-4}(x))\alpha(x^{n-3})\right) \alpha^{n-2}(x)\\
&\relphantom{} + 3\left(2(x^{n-3} \bullet \alpha^{n-4}(x)) \alpha^{n-3}(x)\right) \alpha^{n-2}(x)\\
&= 6x^{3,n-3} + 6\left(x^{2,n-3} + 2x^{n-2,1}\right) \alpha^{n-2}(x)\\
&= 6x^{n-3,3} + 6(3x^{n-1})\alpha^{n-2}(x)\\
&= 6x^{n-3,3} + 18x^n.
\end{split}
\]
A similar calculation yields:
\[
\begin{split}
& F_R\left(\alpha^{n-4}(x), \alpha^{n-4}(x), \alpha^{n-4}(x), x^{n-3}\right)\\
&= 3\alpha\left(2\alpha^{n-4}(x)\alpha^{n-4}(x)\right) \alpha\left(\alpha^{n-4}(x) \bullet x^{n-3}\right) + 3\alpha\left(\alpha^{n-4}(x) \bullet x^{n-3}\right) \alpha\left(2\alpha^{n-4}(x)\alpha^{n-4}(x)\right)\\
&= 12x^{2,n-2} + 12x^{n-2,2}\\
&= 24x^{n-2,2}.
\end{split}
\]
Therefore, \eqref{Fi} is equivalent to
\[
6x^{n-3,3} + 18x^n = 24x^{n-2,2},
\]
which in turn is equivalent to the desired equality \eqref{i}.
\end{proof}

Using Lemma \ref{lem1:main} we now prove the special case of Theorem \ref{mainthm} when $n=5$.

\begin{lemma}
\label{lem2:main}
Let $A$ be a multiplicative up to fourth Hom-power associative algebra.  Then $A$ is up to fifth Hom-power associative.
\end{lemma}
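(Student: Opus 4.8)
The plan is to reduce fifth Hom-power associativity to a single identity and then read that identity off Lemma \ref{lem1:main}. By hypothesis $A$ is up to fourth Hom-power associative, which is precisely up to $(n-1)$st Hom-power associativity for $n=5$, so both Proposition \ref{prop:n-1} and Lemma \ref{lem1:main} apply with $n=5$. In the notation \eqref{xij}, fifth Hom-power associativity \eqref{nhpa'} amounts to the four equalities $x^5 = x^{5-i,i}$ for $i \in \{1,2,3,4\}$, and the first observation is that these four are heavily redundant.

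First I would dispose of two of the cases immediately. The case $i=1$ reads $x^5 = x^{4,1}$, which holds by the very definition of the fifth Hom-power. Proposition \ref{prop:n-1}, stated in the form $x^{5-i,i} = x^{i,5-i}$, yields $x^{1,4} = x^{4,1}$ and $x^{2,3} = x^{3,2}$; the first of these turns the $i=4$ case into the already established $i=1$ case, and the second reduces the $i=3$ case to the $i=2$ case. Hence everything comes down to proving the single identity $x^5 = x^{3,2}$.

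To obtain this last identity, I would invoke Lemma \ref{lem1:main} with $n=5$, which reads
\[
x^{2,3} = 4x^{3,2} - 3x^5.
\]
Substituting the symmetry relation $x^{2,3} = x^{3,2}$ from Proposition \ref{prop:n-1} into the left-hand side gives $x^{3,2} = 4x^{3,2} - 3x^5$, so that $3x^5 = 3x^{3,2}$. Since $\bk$ has characteristic $0$, dividing by $3$ yields $x^5 = x^{3,2}$, as required. Together with the reductions above, this establishes all four instances of \eqref{nhpa'} for $n=5$, so $A$ is fifth Hom-power associative and therefore up to fifth Hom-power associative.

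There is essentially no serious obstacle here: the whole argument is a formal combination of the two preceding results. The only point demanding care is the bookkeeping that sorts the four cases into those that are automatic (the $i=1$ case by definition, and the $i=3,4$ cases by the symmetry of Proposition \ref{prop:n-1}) and the one genuine case $i=2$ that actually consumes Lemma \ref{lem1:main}; the characteristic-zero hypothesis enters only to cancel the common factor $3$.
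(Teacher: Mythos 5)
Your proof is correct and follows essentially the same route as the paper's: both invoke Proposition \ref{prop:n-1} (the $n=5$ case of \eqref{commute}) to get $x^{4,1}=x^{1,4}$ and $x^{3,2}=x^{2,3}$, and then combine the $n=5$ case of Lemma \ref{lem1:main} with the symmetry $x^{2,3}=x^{3,2}$ to cancel a factor of $3$ and conclude $x^5=x^{3,2}=x^{2,3}$. Your explicit case bookkeeping ($i=1$ by definition, $i=3,4$ by symmetry, $i=2$ the only substantive case) is just a slightly more itemized presentation of the identical argument.
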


\begin{proof}
To show that $A$ is fifth Hom-power associative, pick $x \in A$.  By the $n=5$ case of \eqref{commute} we have
\[
x^5 = x^{4,1} = x^{1,4} \quad\text{and}\quad
x^{3,2} = x^{2,3}.
\]
The $n=5$ case of Lemma \ref{lem1:main} says that
\[
x^{2,3} = 4x^{3,2} - 3x^5.
\]
This implies that
\[
3x^5 = 4x^{3,2} - x^{2,3} = 3x^{3,2},
\]
and hence
\[
x^5 = x^{3,2} = x^{2,3}.
\]
We conclude that
\[
x^5 = x^{i,5-i}
\]
for $1 \leq i \leq 4$, so $A$ is fifth Hom-power associative, as desired.
\end{proof}

The following result relates $x^{n-4,4}$ to $x^{n-2,2}$ and $x^n$.

\begin{lemma}
\label{lem3:main}
Let $(A,\mu,\alpha)$ be a multiplicative up to $(n-1)$st Hom-power associative algebra for some $n \geq 5$.  Then
\begin{equation}
\label{ii}
x^{n-4,4} = 11x^{n-2,2} - 10x^n
\end{equation}
for all $x \in A$.
\end{lemma}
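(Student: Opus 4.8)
The plan is to mimic the proof of Lemma \ref{lem1:main}: realize \eqref{ii} as a special case of the linearized fourth Hom-power identity $F = 0$ from Theorem \ref{thm:fourth}, evaluated at a carefully chosen four-tuple of twisted Hom-powers of a fixed $x \in A$, and then eliminate the intermediate quantity $x^{n-3,3}$ by invoking Lemma \ref{lem1:main}. Since $A$ is up to $(n-1)$st Hom-power associative with $n \geq 5$, it is in particular fourth Hom-power associative, so $x^4 = \alpha(x^2)\alpha(x^2)$ holds and hence $F(a,b,c,d) = 0$ for all $a,b,c,d \in A$; moreover all of $x^2,\ldots,x^{n-1}$ are governed by the hypothesis and by Proposition \ref{prop:n-1}.

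To force $x^{n-4,4}$ to appear, I would arrange the four arguments so that the innermost factor of the nested $F_L$-terms can build a fourth Hom-power. In each summand $\left((p_1 \bullet p_2)\alpha(p_3)\right)\alpha^2(p_4)$ the outermost slot contributes $\alpha^2(p_4)$, so the remaining three slots produce a factor whose $x$-power is $n$ minus that of $p_4$; the bidegree $(4,n-4)$ thus forces $p_4$ to carry $x^{n-4}$ while $p_1,p_2,p_3$ build $x^4$. Three first-power arguments can only build $x^3$, so one of these three must be a second Hom-power. This leads to the substitution
\[
F\left(\alpha^{n-5}(x^2),\ \alpha^{n-4}(x),\ \alpha^{n-4}(x),\ \alpha(x^{n-4})\right) = 0 ,
\]
whose four entries carry $x$-powers $2,1,1,n-4$ summing to $n$, with the twists arranged so that the placement $p_1 = \alpha^{n-5}(x^2)$, $p_2 = p_3 = \alpha^{n-4}(x)$, $p_4 = \alpha(x^{n-4})$ collapses, via multiplicativity and third Hom-power associativity, to $2\,x^{4,n-4}$.

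Next I would expand $F_L$ over the twelve permutations in $I_L$ and $F_R$ over the six in $I_R$, simplifying each summand using multiplicativity, the definition \eqref{hompower} of the Hom-powers, up to $(n-1)$st Hom-power associativity, and Proposition \ref{prop:n-1} (which identifies each $x^{i,n-i}$ with $x^{n-i,i}$). Classifying the $F_L$-terms by the $x$-power of $p_4$ (namely $n-4$, $2$, or $1$) shows that $F_L$ collapses to a combination of $x^{4,n-4}$, $x^{n-2,2}$, and $x^n$ only; classifying the $F_R$-terms $\alpha(p_1\bullet p_2)\alpha(p_3\bullet p_4)$ by the way $\{2,1,1,n-4\}$ splits into two pairs shows that $F_R$ collapses to a combination of $x^{3,n-3}$ and $x^{n-2,2}$. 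Setting $F_L - F_R = 0$ therefore yields a single linear relation among $x^{4,n-4}$, $x^{3,n-3}$, $x^{n-2,2}$, and $x^n$. Substituting $x^{3,n-3} = x^{n-3,3} = 4x^{n-2,2} - 3x^n$ from Lemma \ref{lem1:main} and solving for $x^{n-4,4} = x^{4,n-4}$ should produce \eqref{ii}.

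The main obstacle is the bookkeeping in this expansion. Unlike Lemma \ref{lem1:main}, where three of the four arguments coincided and the twelve terms of $F_L$ grouped into only three distinct expressions, here the four arguments fall into three distinct types (with a single repetition), so many more inequivalent placements arise and each must be reduced to a clean $x^{i,n-i}$. Checking that the twists align in every summand, and that the resulting coefficients combine to exactly $11$ and $-10$ after the substitution from Lemma \ref{lem1:main}, is the delicate part; the conceptual content is entirely contained in the choice of the four-tuple above.
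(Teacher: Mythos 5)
Your proposal matches the paper's proof of Lemma \ref{lem3:main}: the paper applies \eqref{F0} to exactly your four-tuple $F\left(\alpha^{n-5}(x^2), \alpha^{n-4}(x), \alpha^{n-4}(x), \alpha(x^{n-4})\right) = 0$, obtains the relation $6x^{n-4,4} + 6x^{n-2,2} + 12x^n = 16x^{n-3,3} + 8x^{n-2,2}$ (with $F_L$ and $F_R$ collapsing to precisely the combinations of $x^{n-4,4}$, $x^{n-2,2}$, $x^n$ and of $x^{n-3,3}$, $x^{n-2,2}$ that you predicted), and then eliminates $x^{n-3,3}$ via Lemma \ref{lem1:main} just as you describe. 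The expansion bookkeeping you flag as delicate is likewise left as ``a calculation similar to the proof of Lemma \ref{lem1:main}'' in the paper, so your argument is correct and takes essentially the same route.
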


\begin{proof}
A calculation similar to the proof of Lemma \ref{lem1:main}, applied to the special case
\begin{equation}
\label{Fii}
F\left(\alpha^{n-5}(x^2), \alpha^{n-4}(x), \alpha^{n-4}(x), \alpha(x^{n-4})\right) = 0
\end{equation}
of \eqref{F0}, yields
\begin{equation}
\label{ii'}
6x^{n-4,4} + 6x^{n-2,2} + 12x^n
= 16x^{n-3,3} + 8x^{n-2,2}.
\end{equation}
Indeed, the left and the right sides of \eqref{ii'} are, respectively, $F_L$ and $F_R$ applied to the same variables as in \eqref{Fii}.  Continuing \eqref{ii'} we have:
\[
\begin{split}
3x^{n-4,4}
&= 8x^{n-3,3} + x^{n-2,2} - 6x^n\\
&= 8\left(4x^{n-2,2} - 3x^n\right) + x^{n-2,2} - 6x^n \quad\text{(by \eqref{i})}\\
&= 33x^{n-2,2} - 30x^n.
\end{split}
\]
Dividing by three, we obtain the desired equality \eqref{ii}.
\end{proof}

In the next two Lemmas, we assume that $n \geq 6$, which is sufficient in view of Lemma \ref{lem2:main}.  In the following result, part of the $n$th Hom-power associativity of $A$ is established.

\begin{lemma}
\label{lem4:main}
Let $(A,\mu,\alpha)$ be a multiplicative up to $(n-1)$st Hom-power associative algebra for some $n \geq 6$.  Then
\begin{equation}
\label{v}
x^n = x^{n-2,2}
\end{equation}
for all $x \in A$.
\end{lemma}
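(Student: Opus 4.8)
The plan is to derive $x^{n} = x^{n-2,2}$ by pitting one further specialization of the linearized identity \eqref{F0} against the relations \eqref{i} and \eqref{ii} already established. To organize the bookkeeping I will write $a = x^{n}$ and $p_{i} = x^{n-i,i}$ in the notation \eqref{xij}, so that $p_{1} = a$ by \eqref{nhpa'}, while Lemmas \ref{lem1:main} and \ref{lem3:main} become $p_{3} = 4p_{2} - 3a$ and $p_{4} = 11p_{2} - 10a$; the target \eqref{v} is exactly $p_{2} = a$. I will also use the commutativity of Proposition \ref{prop:n-1} in the form $p_{i} = p_{n-i}$, which is what allows the long Hom-powers produced below to be folded back onto $a, p_{2}, p_{3}$ and $p_{4}$.

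First I would apply Theorem \ref{thm:fourth} to a specialization of $F$ whose four arguments are, up to $\alpha$-powers, the Hom-powers $x^{n-4}$, $x$, $x$, and $x^{2}$ (of total degree $n$), but placed in the four slots of $F$ in an order different from the one used in Lemma \ref{lem3:main}. Since the index sets $I_{L}$ and $I_{R}$ of Definition \ref{def:F} are not invariant under permuting the four positions, a new assignment should yield a relation genuinely different from \eqref{ii'}. Here the hypothesis $n \geq 6$, i.e. $n-4 \geq 2$, is essential: it guarantees that $x^{n-4}$ is a Hom-power of order at least two, distinct from the two single copies of $x$, so that reshuffling the arguments is not a vacuous operation. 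For $n = 5$ one would have $x^{n-4} = x$ and the multiset would collapse to $\{x^{2},x,x,x\}$, which is exactly why that case had to be peeled off separately in Lemma \ref{lem2:main}.

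The computational core is to expand $F_{L}$ and $F_{R}$ for this choice, exactly as in the proofs of Lemmas \ref{lem1:main} and \ref{lem3:main}. I would use multiplicativity to move $\alpha$ through $\mu$, the recursion \eqref{hompower} together with up to $(n-1)$st Hom-power associativity to collapse each $\bullet$-product of two suitably shifted Hom-powers of total degree at most $n-1$ into a single Hom-power, and finally $p_{i} = p_{n-i}$ to express everything through $a, p_{2}, p_{3}, p_{4}$ only. A term count of the twelve summands of $\sum_{L}$ and the six summands of $\sum_{R}$ then produces an integer relation $\lambda_{2}p_{2} + \lambda_{3}p_{3} + \lambda_{4}p_{4} = \lambda_{0}a$. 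Substituting \eqref{i} and \eqref{ii} to eliminate $p_{3}$ and $p_{4}$ collapses this to a single equation $c\,(p_{2} - a) = 0$; the constant term matches automatically, because the genuinely power-associative situation $p_{2} = p_{3} = p_{4} = a$ must satisfy any identity of this kind, which forces $\lambda_{0} = \lambda_{2} + \lambda_{3} + \lambda_{4}$ and hence makes the coefficients of $p_{2}$ and of $a$ coincide after the substitution.

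The main obstacle is this final nondegeneracy check: one must confirm that the third relation is independent of \eqref{i} and \eqref{ii}, i.e. that the coefficient $c = \lambda_{2} + 4\lambda_{3} + 11\lambda_{4}$ is nonzero, for only then does $c\,(p_{2}-a)=0$ force $p_{2} = a$. This reduces to a finite coefficient computation of the same flavor as those already carried out, and it is precisely the asymmetry of $I_{L}$ and $I_{R}$ under the transposition of positions distinguishing the new arrangement from that of Lemma \ref{lem3:main} that should make the new relation independent and thereby deliver $c \neq 0$. Establishing $c \neq 0$ then completes the proof of \eqref{v}.
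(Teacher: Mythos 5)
There is a genuine gap, and it sits at the center of your argument: the hoped-for third relation does not exist, because $F$ is totally symmetric in its four arguments. Although the index sets $I_L$ and $I_R$ of Definition \ref{def:F} are not themselves invariant under $S_4$, the sums they index are: as the paper notes immediately after Definition \ref{def:F}, the twelve summands of $\sum_L$ are exactly the twelve distinct permutations of $\left((x \bullet y)\alpha(z)\right)\alpha^2(w)$ (an expression already symmetric in $x,y$), and the six summands of $\sum_R$ are exactly the six distinct permutations of $\alpha(x \bullet y)\alpha(z \bullet w)$ (already symmetric under $x \leftrightarrow y$ and $z \leftrightarrow w$). Equivalently, $F$ is the full polarization of the quartic form $B(x) = x^4 - \alpha(x^2)\alpha(x^2)$ from the proof of Theorem \ref{thm:fourth}, and full polarizations of a form in one variable are symmetric. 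Hence $F \circ \sigma = F$ for every $\sigma \in S_4$, so feeding the arguments $\alpha^{n-5}(x^2), \alpha^{n-4}(x), \alpha^{n-4}(x), \alpha(x^{n-4})$ of Lemma \ref{lem3:main} into the four slots in any other order reproduces \eqref{ii'} verbatim. (The $\alpha$-decorations give no extra freedom either: they are forced by the requirement that each term collapse to some $x^{i,j}$ with $i+j = n$.) Since \eqref{ii} is precisely what \eqref{ii'} becomes after eliminating $x^{n-3,3}$ via \eqref{i}, your relation $\lambda_2 p_2 + \lambda_3 p_3 + \lambda_4 p_4 = \lambda_0 a$ (in your notation $p_i = x^{n-i,i}$, $a = x^n$) is already a consequence of \eqref{i} and \eqref{ii}, and the elimination you describe necessarily terminates in $c = 0$: you obtain $0 = 0$, not $p_2 = a$.

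The paper escapes this by changing the degree multiset of the arguments, not their order. It specializes \eqref{F0} twice more, at $\left(\alpha^{n-5}(x^2), \alpha^{n-5}(x^2), \alpha^{n-4}(x), \alpha^2(x^{n-5})\right)$ (degrees $2,2,1,n-5$) and at $\left(\alpha^{n-6}(x^3), \alpha^{n-4}(x), \alpha^{n-4}(x), \alpha^2(x^{n-5})\right)$ (degrees $3,1,1,n-5$), obtaining \eqref{iii} and \eqref{iv}; both involve the genuinely new unknown $x^{n-5,5}$, which is eliminated by subtracting the two, after which \eqref{i} and \eqref{ii} reduce the result to $10x^n = 10x^{n-2,2}$, i.e.\ \eqref{v}. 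This is also where the hypothesis $n \geq 6$ actually enters --- it makes $x^{n-5}$ and $\alpha^{n-6}$ meaningful --- not, as you suggest, to keep $x^{n-4}$ distinct from $x$ so that reshuffling is nonvacuous; by the symmetry of $F$, reshuffling is vacuous for every $n$. Your consistency check $\lambda_0 = \lambda_2 + \lambda_3 + \lambda_4$ is a sound sanity test, but it cannot substitute for producing relations from genuinely new specializations of \eqref{F0}.
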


\begin{proof}
Pick $x \in A$.  A calculation similar to the proof of Lemma \ref{lem1:main}, applied to the special case
\[
F\left(\alpha^{n-5}(x^2), \alpha^{n-5}(x^2), \alpha^{n-4}(x), \alpha^2(x^{n-5})\right) = 0
\]
of \eqref{F0}, yields
\begin{equation}
\label{iii}
3x^n = -6x^{n-2,2} + 8x^{n-3,3}+ 4x^{n-4,4} - 3x^{n-5,5}.
\end{equation}
Likewise, the special case
\[
F\left(\alpha^{n-6}(x^3), \alpha^{n-4}(x), \alpha^{n-4}(x), \alpha^2(x^{n-5})\right) = 0
\]
of \eqref{F0} yields
\begin{equation}
\label{iv}
6x^n = 4x^{n-2,2} - 3x^{n-3,3} + 8x^{n-4,4} - 3x^{n-5,5}.
\end{equation}
Subtracting \eqref{iii} from \eqref{iv} and using \eqref{i} and \eqref{ii}, we obtain:
\[
\begin{split}
3x^n &= 10x^{n-2,2} - 11x^{n-3,3} + 4x^{n-4,4}\\
&= 10x^{n-2,2} - 11\left(4x^{n-2,2} - 3x^n\right) + 4\left(11x^{n-2,2} - 10x^n\right)\\
&= 10x^{n-2,2} - 7x^n.
\end{split}
\]
This is equivalent to
\[
10x^n = 10x^{n-2,2},
\]
which yields the desired equality \eqref{v}.
\end{proof}

The following result will be used in the induction step of the proof of Theorem \ref{mainthm}.

\begin{lemma}
\label{lem5:main}
Let $(A,\mu,\alpha)$ be a multiplicative up to $(n-1)$st Hom-power associative algebra for some $n \geq 6$.  Suppose $k$ is an integer in the range $1 \leq k \leq n/2 - 2$.  Then
\begin{equation}
\label{vi}
3x^{n-(k+2),k+2} = -2x^n + 8x^{n-(k+1),k+1} - 3x^{n-k,k}
\end{equation}
for all $x \in A$.
\end{lemma}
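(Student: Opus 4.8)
The plan is to specialize the linearized identity \eqref{F0} of Theorem~\ref{thm:fourth} to one carefully chosen $4$-tuple and then remove a single stray term using the relation \eqref{v} just established in Lemma~\ref{lem4:main}. Following the pattern of Lemmas~\ref{lem1:main} and \ref{lem3:main}, where a power $x^p$ is always fed in as $\alpha^{n-3-p}(x^p)$ and the four exponents sum to $n$, I would apply $F=0$ to the arguments of degrees $1,1,k,n-k-2$, namely
\[
F\!\left(\alpha^{n-4}(x),\,\alpha^{n-4}(x),\,\alpha^{n-3-k}(x^{k}),\,\alpha^{k-1}(x^{n-k-2})\right)=0.
\]
(For $k=1$ this is exactly the input of Lemma~\ref{lem1:main}.) The constraint $1\le k\le n/2-2$ with $n\ge 6$ ensures $n-k-2\ge 1$, so all four arguments have positive degree.

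The engine of the proof is the evaluation of $F_L$ and $F_R$ on these arguments, call them $A_1,\dots,A_4$ with degrees $p_1,\dots,p_4$. The calibration $A_i=\alpha^{n-3-p_i}(x^{p_i})$ is arranged so that, by multiplicativity together with up to $(n-1)$st Hom-power associativity in the form $x^{a,b}=x^{a+b}$ for $a+b\le n-1$, every $\bullet$-product collapses as $A_i\bullet A_j=2\alpha^{n-2-(p_i+p_j)}(x^{p_i+p_j})$ whenever $p_i+p_j\le n-1$. Iterating this twice, the generic left-nested summand $\big((A_a\bullet A_b)\alpha(A_c)\big)\alpha^2(A_d)$ of $F_L$ reduces to $2\,x^{n-p_d,p_d}$, depending only on the degree $p_d$ occupying the outer $\alpha^2$-slot; since each argument sits there in exactly three of the twelve summands, $F_L=6\sum_i x^{n-p_i,p_i}$. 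Likewise each of the three complementary pair-partitions in $\sum_R$ (each realized by two of the six permutations of $I_R$) contributes $8\,x^{n-s,s}$, with $s$ the pair-sum, after invoking Proposition~\ref{prop:n-1} to write $x^{s,n-s}=x^{n-s,s}$. Substituting the degrees $1,1,k,n-k-2$ and again using Proposition~\ref{prop:n-1} to fold $n-k-2$ and $n-k-1$ back onto $k+2$ and $k+1$, I obtain
\[
F_L=12x^{n}+6x^{n-k,k}+6x^{n-(k+2),k+2},\qquad
F_R=8x^{n-2,2}+16x^{n-(k+1),k+1}.
\]

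Setting $F_L=F_R$ gives
\[
12x^{n}+6x^{n-k,k}+6x^{n-(k+2),k+2}=8x^{n-2,2}+16x^{n-(k+1),k+1},
\]
and here is where the previous lemma enters: substituting \eqref{v}, i.e.\ $x^{n-2,2}=x^{n}$, replaces $8x^{n-2,2}$ by $8x^{n}$. Transposing and dividing by $2$ yields
\[
3x^{n-(k+2),k+2}=-2x^{n}+8x^{n-(k+1),k+1}-3x^{n-k,k},
\]
which is precisely \eqref{vi}.

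The difficulty is bookkeeping rather than conceptual. One must check that the calibrated twistings force each of the eighteen monomials in $F_L$ and $F_R$ to collapse onto one of the four admissible terms $x^{n}$, $x^{n-k,k}$, $x^{n-(k+1),k+1}$, $x^{n-(k+2),k+2}$ (systematically using Proposition~\ref{prop:n-1} to identify $x^{i,n-i}$ with $x^{n-i,i}$), and that the multiplicities come out as $12,6,6$ on the $F_L$ side and $8,16$ on the $F_R$ side. The one point that must be gotten right is that pairing the two degree-$1$ arguments produces the term $x^{n-2,2}$, and that this is exactly the term eliminated by \eqref{v}; this is why the lemma is placed after, and depends on, Lemma~\ref{lem4:main}.
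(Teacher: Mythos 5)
Your proposal is correct and takes essentially the same route as the paper: since $F_L$ and $F_R$ are symmetric in the four arguments, your specialization $F\left(\alpha^{n-4}(x),\alpha^{n-4}(x),\alpha^{n-3-k}(x^k),\alpha^{k-1}(x^{n-k-2})\right)=0$ is exactly the paper's choice $F\left(\alpha^{n-k-3}(x^k),\alpha^{n-4}(x),\alpha^{n-4}(x),\alpha^{k-1}(x^{n-k-2})\right)=0$, and your equation $F_L=F_R$ is precisely twice the paper's intermediate identity $3x^{n-(k+2),k+2} = -6x^n + 8x^{n-(k+1),k+1} - 3x^{n-k,k} + 4x^{n-2,2}$, followed by the same final step of using Lemma \ref{lem4:main} to replace $x^{n-2,2}$ by $x^n$. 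Your explicit evaluation of the eighteen monomials (giving $F_L = 12x^n + 6x^{n-k,k} + 6x^{n-(k+2),k+2}$ and $F_R = 8x^{n-2,2} + 16x^{n-(k+1),k+1}$) correctly fills in the computation the paper leaves implicit as ``similar to the proof of Lemma \ref{lem1:main}.''
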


\begin{proof}
Pick $x \in A$.  A calculation similar to the proof of Lemma \ref{lem1:main}, applied to the special case
\[
F\left(\alpha^{n-k-3}(x^k), \alpha^{n-4}(x), \alpha^{n-4}(x), \alpha^{k-1}(x^{n-k-2})\right) = 0
\]
of \eqref{F0}, yields
\[
3x^{n-(k+2),k+2} = -6x^n + 8x^{n-(k+1),k+1} - 3x^{n-k,k} + 4x^{n-2,2}.
\]
To finish the proof, one applies Lemma \ref{lem4:main} to replace $x^{n-2,2}$ by $x^n$ in the previous line.
\end{proof}

We can now finish the proof of Theorem \ref{mainthm}

\begin{proof}[Proof of Theorem \ref{mainthm}]
The $n=5$ case of Theorem \ref{mainthm} was established in Lemma \ref{lem2:main}.  So we may assume, without lost of generality, that $n \geq 6$.  Pick $x \in A$.  We must prove the $n$th Hom-power associativity condition
\begin{equation}
\label{npa}
x^n = x^{n-i,i}
\end{equation}
for $1 \leq i \leq n-1$.  In view of Proposition \ref{prop:n-1}, we only need to prove \eqref{npa} when $1 \leq i \leq n/2$.  We prove this by a finite induction.

The $i=1$ case of \eqref{npa} is true by the definition of the Hom-power $x^n$.  The $i=2$ case of \eqref{npa} is true by Lemma \ref{lem4:main}.  For the induction step, suppose $1 \leq k \leq n/2-2$ and that \eqref{npa} is true for $1 \leq i \leq k+1$.  We want to prove \eqref{npa} when $i = k+2$. By Lemma \ref{lem5:main} and the induction hypothesis, we have:
\[
\begin{split}
3x^{n-(k+2),k+2} &= -2x^n + 8x^{n-(k+1),k+1} - 3x^{n-k,k}\\
&= -2x^n + 8x^n - 3x^n\\
&= 3x^n.
\end{split}
\]
Dividing by three in the above calculation yields \eqref{npa} when $i = k+2$, finishing the induction and the proof of Theorem \ref{mainthm}.
\end{proof}



\begin{thebibliography}{AA}
\bibitem{albert1}
A.A. Albert, On the power-associativity of rings, Summa Brasil. Math. 2 (1948) 21-32.

\bibitem{albert2}
A.A. Albert, Power-associative rings, Trans. Amer. Math. Soc. 64 (1948) 552-593.

\bibitem{arenas}
M. Arenas, An algorithm for associative bilinear forms, Linear Alg. Appl. 430 (2009) 286-295.

\bibitem{baez}
J.C. Baez, The octonions, Bull. Amer. Math. Soc. 39 (2002) 145-205.

\bibitem{carmody}
K. Carmody, Circular and hyperbolic quaternions, octonions, and sedenions, Appl. Math. Comp. 28 (1988) 47-72.

\bibitem{cawagas}
R.E. Cawagas, Loops embedded in generalized Cayley algebras of dimension $2^r,r\geq 2$, Int. J. Math. Math. Sci. 28 (2001) 181-187.

\bibitem{gm}
M. Goze and E. Remm, Lie-admissible algebras and operads, J. Algebra 273 (2004) 129-152.

\bibitem{hls}
J.T. Hartwig, D. Larsson, and S.D. Silvestrov, Deformations of Lie algebras using $\sigma$-derivations, J. Algebra 295 (2006) 314-361.

\bibitem{imaeda}
K. Imaeda and M. Imaeda, Sedenions: algebra and analysis, Appl. Math. Comp. 115 (2000) 77-88.

\bibitem{mak}
A. Makhlouf, Hom-alternative algebras and Hom-Jordan algebras, Int. Elect. J. Alg. 8 (2010) 177-190.

\bibitem{ms}
A. Makhlouf and S. Silvestrov, Hom-algebra structures, J. Gen. Lie Theory Appl. 2 (2008) 51-64.

\bibitem{ms2}
A. Makhlouf and S. Silvestrov, Hom-algebras and Hom-coalgebras, arXiv:0811.0400.

\bibitem{schafer}
R.D. Schafer, An introduction to nonassociative algebras, Dover Pub., New York, 1995.

\bibitem{velasco}
M.V. Velasco, Spectral theory for non-associative complete normed algebras and automatic continuity, J. Math. Anal. Appl. 351 (2009) 97-106.

\bibitem{yau}
D. Yau, Enveloping algebras of Hom-Lie algebras, J. Gen. Lie Theory Appl. 2 (2008) 95-108.

\bibitem{yau2}
D. Yau, Hom-algebras and homology, J. Lie Theory 19 (2009) 409-421.

\bibitem{yau3}
D. Yau, The Hom-Yang-Baxter equation, Hom-Lie algebras, and quasi-triangular bialgebras, J. Phys. A 42 (2009) 165202 (12pp).

\bibitem{yau4}
D. Yau, Hom-bialgebras and comodule Hom-algebras, Int. Elect. J. Alg. 8 (2010) 45-64.

\bibitem{yau12}
D. Yau, Hom-Maltsev, Hom-alternative, and Hom-Jordan algebras, arXiv:1002.3944.

\bibitem{yau5}
D. Yau, Hom-Novikov algebras, arXiv:0909.0726.

\bibitem{yau6}
D. Yau, The Hom-Yang-Baxter equation and Hom-Lie algebras, arXiv:0905.1887.

\bibitem{yau7}
D. Yau, The classical Hom-Yang-Baxter equation and Hom-Lie bialgebras, arXiv:0905.1890.

\bibitem{yau8}
D. Yau, Infinitesimal Hom-bialgebras and Hom-Lie bialgebras, arXiv:1001.5000.

\bibitem{yau9}
D. Yau, Hom-quantum groups I: quasi-triangular Hom-bialgebras, 	 arXiv:0906.4128.

\bibitem{yau10}
D. Yau, Hom-quantum groups II: cobraided Hom-bialgebras and Hom-quantum geometry, arXiv:0907.1880.

\bibitem{yau11}
D. Yau, Hom-quantum groups III: representations and module Hom-algebras, arXiv:0911.5402.

\bibitem{yau13}
D. Yau, On $n$-ary Hom-Nambu and Hom-Nambu-Lie algebras, arXiv:1004.2080.

\bibitem{yau14}
D. Yau, On $n$-ary Hom-Nambu and Hom-Maltsev algebras, arXiv:1004.4795.

\bibitem{yau15}
D. Yau, A Hom-associative analogue of $n$-ary Hom-Nambu algebras, arXiv:1005.2373.
\end{thebibliography}
\end{document}